\newtheorem{Theorem}{Theorem}[section]
\newtheorem{Corollary}[Theorem]{Corollary}
\newtheorem{Lemma}[Theorem]{Lemma}
\newtheorem{Proposition}[Theorem]{Proposition}
\newtheorem*{Prop3.2}{Proposition~3.2}
\newtheorem*{Thm3.3}{Theorem~3.3}
\newtheorem*{Prop4.2}{Proposition~4.2}
\newtheorem*{Cor4.3}{Corollary~4.3}
\newtheorem*{Lem5.7}{Lemma~5.7}
\theoremstyle{definition}
\newtheorem{Definition}[Theorem]{Definition}
\newtheorem{Example}[Theorem]{Example}
\newtheorem*{Problem}{Problem}
\newtheorem*{ReconstructionConjecture}{Reconstruction~Conjecture}
\theoremstyle{remark}
\newtheorem*{Remark}{Remark}
\begin{document}
\sloppy
\title{The Reconstruction Conjecture for finite simple graphs and associated directed graphs}
\author{Tetsuya Hosaka}
\email{hosaka.tetsuya@shizuoka.ac.jp}
\address{
Department of Mathematics, Faculty of Science, Shizuoka University, 836 Ohya, Suruga-ku, Shizuoka, 422-8529, Japan}
\date{June 18, 2021}
\keywords{the reconstruction conjecture; reconstructible graphs}
\subjclass[2010]{05C60}

\begin{abstract}
In this paper, we study the Reconstruction Conjecture for finite simple graphs.
Let $\Gamma$ and $\Gamma'$ be finite simple graphs with at least three vertices 
such that there exists a bijective map $f:V(\Gamma) \rightarrow V(\Gamma')$ and 
for any $v\in V(\Gamma)$, there exists an isomorphism $\phi_v:\Gamma-v \to \Gamma'-f(v)$.
Then we define the associated directed graph 
$\widetilde{\Gamma}=\widetilde{\Gamma}(\Gamma,\Gamma',f,\{\phi_v\}_{v\in V(\Gamma)})$ 
with two kinds of arrows from the graphs $\Gamma$ and $\Gamma'$, 
the bijective map $f$ and the isomorphisms $\{\phi_v\}_{v\in V(\Gamma)}$.
By investigating the associated directed graph $\widetilde{\Gamma}$, 
we study when are the two graphs $\Gamma$ and $\Gamma'$ isomorphic.
\end{abstract}

\maketitle

\section{Introduction and preliminaries}\label{sec1}

A finite simple graph $\Gamma$ is said to be {\it reconstructible}, 
if any simple graph $\Gamma'$ with the following property~$(*)$ 
is isomorphic to $\Gamma$.
\begin{enumerate}
\item[$(*)$] There exists a bijective map $f:V(\Gamma) \rightarrow V(\Gamma')$ such that 
for any $v\in V(\Gamma)$, there exists an isomorphism $\phi_v:\Gamma-v \to \Gamma'-f(v)$.
\end{enumerate}
Here $V(\Gamma)$ and $V(\Gamma')$ are the vertex sets of $\Gamma$ and $\Gamma'$ respectively.
Also $\Gamma-v$ and $\Gamma'-f(v)$ 
are the full subgraphs of $\Gamma$ and $\Gamma'$ whose vertex sets are 
$V(\Gamma)-\{v\}$ and $V(\Gamma')-\{f(v)\}$ respectively.

The Reconstruction Conjecture (also often called Ulam Conjecture or Kelly-Ulam Conjecture) 
introduced by P.J.~Kelly and S.M.~Ulam in 1942 
is one of the very famous open problems in Graph Theory and in Mathematics.

\begin{ReconstructionConjecture}
Every finite simple graph with at least three vertices will be reconstructible.
\end{ReconstructionConjecture}

Some classes of reconstructible graphs are known 
(see \cite{BKY}, \cite{BoHe}, \cite{FL1}, \cite{Ho}, \cite{HoX}, 
\cite{Ke}, \cite{Ke2}, \cite{L}, \cite{Ma}, \cite{Mc}, \cite{Mc2}, \cite{Ni} and \cite{U}) 
as follows:
Let $\Gamma$ be a finite simple graph with at least three vertices.
\begin{enumerate}
\item[(i)] If $\Gamma$ is a regular graph, then it is reconstructible.
\item[(ii)] If $\Gamma$ is a tree, then it is reconstructible.
\item[(iii)] If $\Gamma$ is not connected, then it is reconstructible.
\item[(iv)] If $\Gamma$ has at most 11 vertices, then it is reconstructible.
\item[(v)] If $\Gamma$ is a maximal planar graph, then it is reconstructible.
\item[(vi)] If $\Gamma$ is a finite graph that is the 1-skeleton 
of some simplicial flag complex that is a homology manifold of dimension $n \ge 1$, 
then it is reconstructible.
\end{enumerate}
Also it is known that if all 2-connected graphs are reconstructible, then 
every finite simple graph is reconstructible \cite{Y}.

Let $\Gamma$ and $\Gamma'$ be finite simple graphs with at least three vertices 
satisfying the property~$(*)$ as above.

Then we define the directed graph 
\[ \widetilde{\Gamma}=\widetilde{\Gamma}(\Gamma,\Gamma',f,\{\phi_v\}_{v\in V(\Gamma)}) \]
with two kinds of arrows 
(called \textit{the associated directed graph}) as follows:
\begin{enumerate}
\item[(1)] The vertex set $V(\widetilde{\Gamma})$ of $\widetilde{\Gamma}$ is $V(\Gamma)$.
\item[(2)] For $v_1, v_2 \in V(\widetilde{\Gamma})$, 
there is a normal-arrow from $v_1$ to $v_2$ in $\widetilde{\Gamma}$, 
denoted by $v_1 \longrightarrow v_2$, 
if and only if $[v_1,v_2]\not\in E(\Gamma)$ and $[f(v_1), \phi_{v_1}(v_2)]\in E(\Gamma')$.
\item[(3)] For $v_1, v_2 \in V(\widetilde{\Gamma})$, 
there is a dashed-arrow from $v_1$ to $v_2$ in $\widetilde{\Gamma}$, 
denoted by $v_1 \dashrightarrow v_2$, 
if and only if $[f(v_1),f(v_2)]\not\in E(\Gamma')$ and $[v_1, \phi_{v_1}^{-1}(f(v_2))]\in E(\Gamma)$
\end{enumerate}
(see Figure~\ref{figure0-definition1}).
\begin{figure}[h]
\centering
{
\vspace*{2mm}
\includegraphics[keepaspectratio, scale=0.90, bb=0 0 386 130]{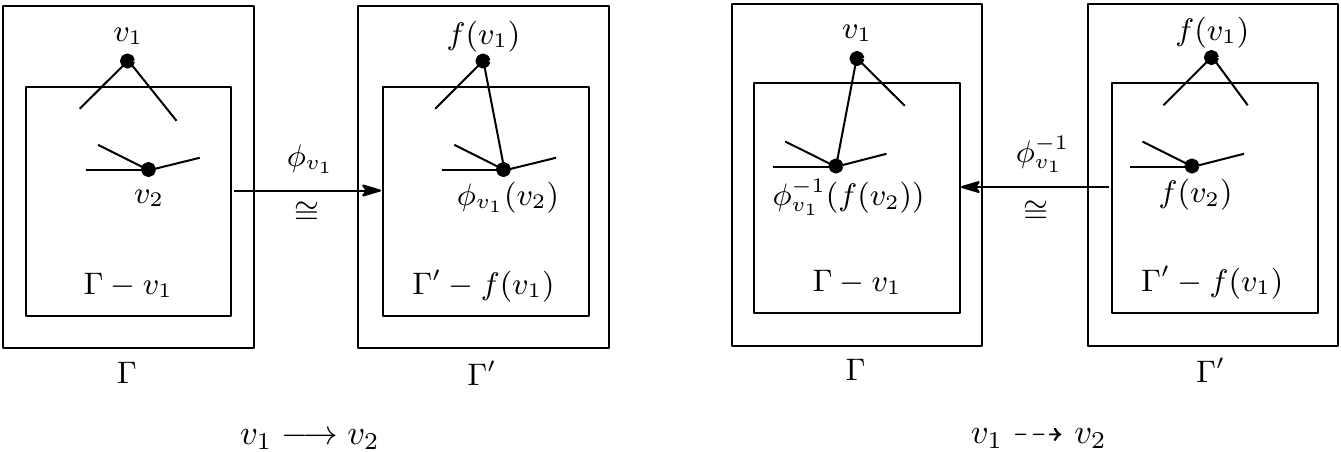}
}
\caption{Definition of arrows in $\widetilde{\Gamma}$}
\label{figure0-definition1}
\end{figure}

The purpose of this paper is to consider when are the two graphs $\Gamma$ and $\Gamma'$ isomorphic 
by investigating the associated directed graph $\widetilde{\Gamma}$, 
and we study the problem of when are finite simple graphs reconstructible.

We give some examples of $\widetilde{\Gamma}$ in Section~\ref{sec2} 
and we investigate some properties of $\widetilde{\Gamma}$ in Section~\ref{sec3}.

\begin{Prop3.2}
For all $v\in V(\widetilde{\Gamma})$, 
the number of normal-arrows arising from $v$ and 
the number of dashed-arrows arising from $v$ are equal in $\widetilde{\Gamma}$.
\end{Prop3.2}

We show the following theorem in Section~\ref{sec3}.

\begin{Thm3.3}
For any vertex $v \in V(\Gamma)=V(\widetilde{\Gamma})$, 
the following two statements are equivalent$:$
\begin{enumerate}
\item[\textnormal{(i)}] 
There are no normal-arrows arising from $v$ (and hence no dashed-arrows arising from $v$) in 
the associated directed graph $\widetilde{\Gamma}$.
\item[\textnormal{(ii)}]
The isomorphism $\phi_{v}:\Gamma-v \to \Gamma'-f(v)$ 
extends to the isomorphism $\overline{\phi_{v}}:\Gamma \to \Gamma'$ 
as $\overline{\phi_{v}}|_{\Gamma-v}=\phi_{v}$ and $\overline{\phi_{v}}(v)=f(v)$.
\end{enumerate}
\end{Thm3.3}

By Theorem~\ref{Thm1}, 
if for some vertex $v \in V(\widetilde{\Gamma})$,
\begin{enumerate}
\item[\textnormal{(i)}] 
there are no normal-arrows arising from $v$ (and no dashed-arrows arising from $v$) in 
the associated directed graph $V(\widetilde{\Gamma})$, 
\end{enumerate}
then $\Gamma\cong \Gamma'$.

\medskip

Now we state the otherwise case.
Suppose that for every vertex $v \in V(\widetilde{\Gamma})$ 
the statement (i) above does not hold.
Then there exist a normal-arrow and a dashed-arrow arising 
from each vertex $v \in V(\widetilde{\Gamma})$ 
by Proposition~\ref{Proposition1}.
Since the directed graph $\widetilde{\Gamma}$ is finite, 
for some positive number $k\in {\mathbb{N}}$ and 
some vertices $v_1,\ldots, v_{2k} \in V(\widetilde{\Gamma})$, 
there exist arrows $v_{2i-1} \longrightarrow v_{2i}$ and $v_{2i} \dashrightarrow v_{2i+1}$ 
in $\widetilde{\Gamma}$ 
for any $i=1,\ldots,k$, where $v_{2k+1}:=v_1$ (see Figure~\ref{figure-cycle}).
Here the vertices $v_1,\ldots, v_{2k}$ need not be different all together.
\begin{figure}[h]
\centering
{
\includegraphics[keepaspectratio, scale=0.90, bb= 0 0 373 64]{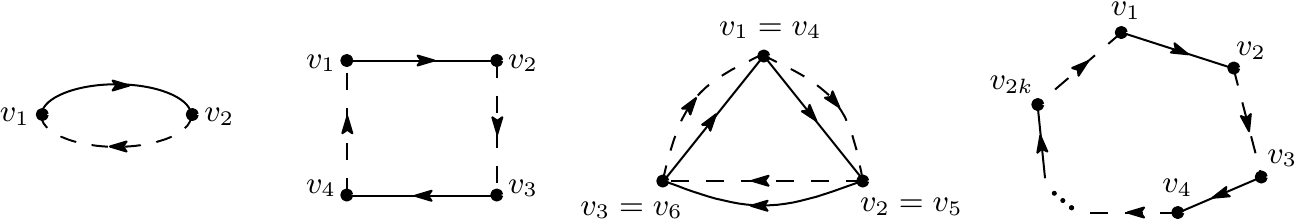}
}
\caption{Examples of cycles with alternate normal-arrows and dashed-arrows}\label{figure-cycle}
\end{figure}

We investigate on cycles with alternate normal-arrows and dashed-arrows in $\widetilde{\Gamma}$ in Section~\ref{sec4}.
For such a cycle, we define ``$\alpha$-type'' and ``$\beta$-type''.
We show that if such a cycle is $\alpha$-type then 
there exist normal-arrows $v_1 \longrightarrow v_2$ and 
$v_2 \longrightarrow v_1$ in $\widetilde{\Gamma}$ in Section~\ref{sec4}.

\begin{Prop4.2}
If for some positive number $k\in {\mathbb{N}}$ and 
some sequence $v_1,\ldots,v_{2k} \in V(\widetilde{\Gamma})$, 
there exist arrows $v_{2i-1} \longrightarrow v_{2i}$ and 
$v_{2i} \dashrightarrow v_{2i+1}$ in $\widetilde{\Gamma}$ 
for any $i=1,\ldots,k$ (where $v_{2k+1}:=v_1$) 
and if the cycle $v_1,\ldots,v_{2k}$ is $\alpha$-type 
then 
there exist normal-arrows $v_1 \longrightarrow v_2$ and 
$v_2 \longrightarrow v_1$ in $\widetilde{\Gamma}$.
\end{Prop4.2}

By Proposition~4.2, we obtain the following.

\begin{Cor4.3}
Let $\Gamma$ and $\Gamma'$ be finite simple graphs with at least three vertices 
satisfying the property~$(*)$.
Then for the associated directed graph $\widetilde{\Gamma}$, 
\begin{enumerate}
\item[\textnormal{(I)}] there exists a vertex $v\in V(\widetilde{\Gamma})$ 
with no normal-arrow arising from $v$ in $\widetilde{\Gamma}$ 
(then $\Gamma \cong \Gamma'$ by Theorem~\ref{Thm1}), 
\item[\textnormal{(II)}] for some $v_1,v_2\in V(\widetilde{\Gamma})$,
there exist normal-arrows $v_1 \longrightarrow v_2$ and $v_2 \longrightarrow v_1$ 
in $\widetilde{\Gamma}$, or
\item[\textnormal{(III)}] 
some and all cycles with alternate normal-arrows and dashed-arrows in $\widetilde{\Gamma}$ 
are $\beta$-type.
\end{enumerate}
\end{Cor4.3}

In the case that (II) holds and the cycle $v_1,v_2$ is $\alpha$-type, 
we investigate structures of $\Gamma$ and $\Gamma'$ in Section~\ref{sec5}.
As a preparation, 
we define a finite simple graph $A=A(n;B,C)$.

\begin{Definition}
For a number $n\in {\mathbb{N}}$ at least $3$ 
and subsets $B$ and $C$ of the set $\{1,\ldots,n-1 \}$, 
we define the finite simple graph $A=A(n;B,C)$ as follows:
\begin{enumerate}
\item[(1)] The graph $A$ has $n$ vertices denoted by $V(A)=\{ a_1,a_2,\ldots,a_n \}$.
\item[(2)] $b\in B$ if and only if $[a_{2t-1},a_{2t-1+b}]\in E(A)$ 
for any number $t\in {\mathbb{N}}$ as $1 \le 2t-1 <2t-1+b \le n$.
\item[(3)] $c\in C$ if and only if $[a_{2t},a_{2t+c}]\in E(A)$ 
for any number $t\in {\mathbb{N}}$ as $2 \le 2t <2t+c \le n$.
\end{enumerate}
\end{Definition}

We give some examples of $A(n;B,C)$ in Figure~\ref{figure-A}.

\begin{figure}[h]
\centering
{
\vspace{1mm}
\includegraphics[keepaspectratio, scale=0.90, bb=0 0 396 345]{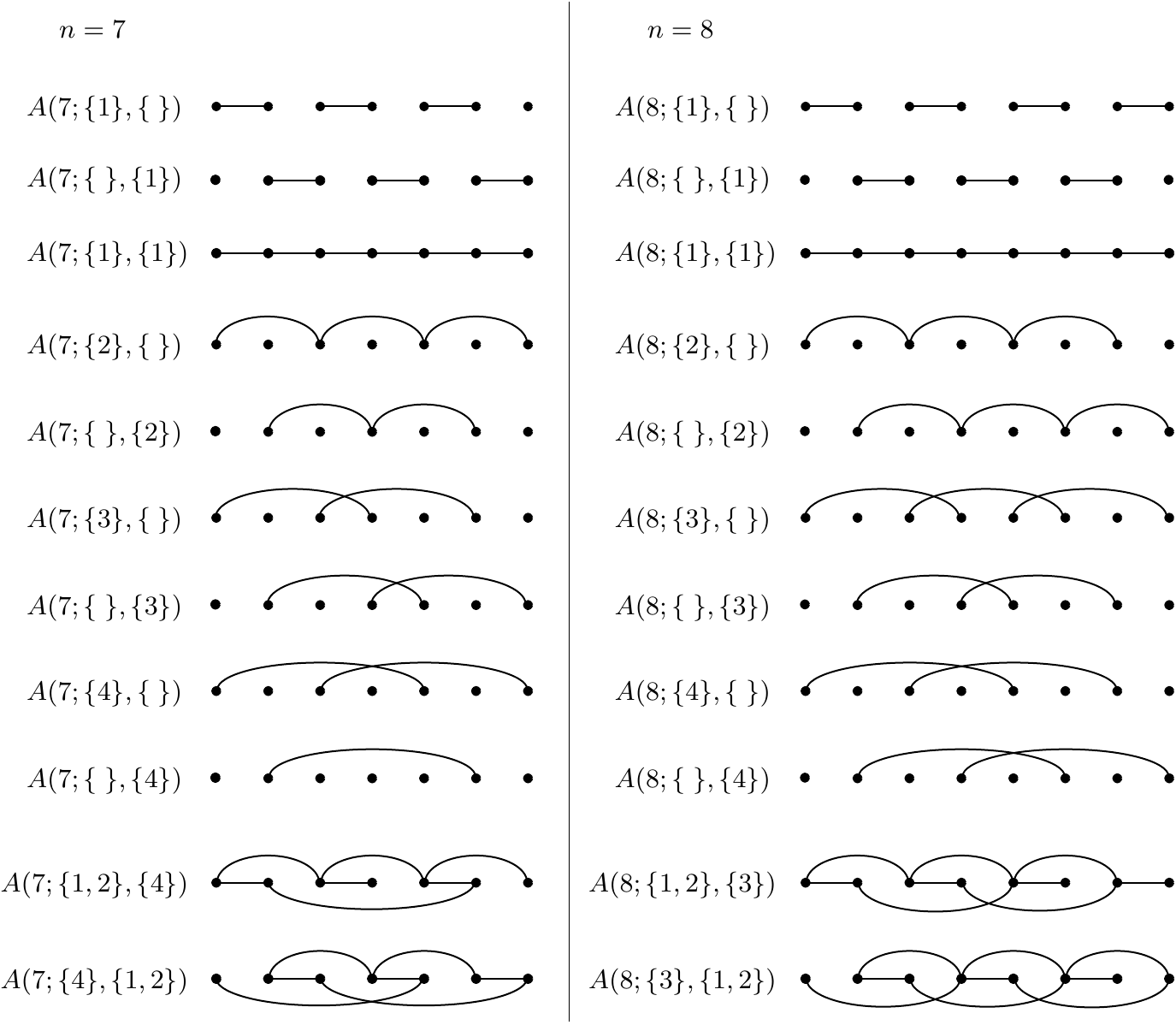}
\vspace*{-2mm}
}
\caption{Examples of graphs $A(n;B,C)$ for $n=7,8$}\label{figure-A}
\end{figure}

Then we have the following by the definition of $A(n;B,C)$.

\begin{Lemma}\label{LemA}
Let $n\in {\mathbb{N}}$ be a number at least $3$ 
and let $B$ and $C$ be subsets of the set $\{1,\ldots,n-1 \}$.
Let $A:=A(n;B,C)$ and $A':=A(n;C,B)$, 
where $V(A)=\{a_1,a_n,\ldots,a_n \}$ and $V(A')=\{b_1,b_2,\ldots,b_n \}$ are 
the numbering of vertices as in the definition of $A=A(n;B,C)$ and $A'=A(n;C,B)$.
Let $a'_i:=b_{n-i+1}$ for $i=1,\ldots,n$.
(Hence $V(A')=\{a'_1,a'_2,\ldots,a'_n \}$.)
Then the two graphs $A$ and $A'$ satisfy the following$:$
\begin{enumerate}
\item[\textnormal{(1)}] There exists 
the isomorphism $\Psi:A-\{a_1,a_2 \} \to A-\{ a_{n-1},a_n \}$ 
such that $\Psi(a_i)=a_{i-2}$ for any $i=3,\ldots,n$.
\item[\textnormal{(2)}] There exists 
the isomorphism $\Psi':A'-\{a'_1,a'_2 \} \to A'-\{ a'_{n-1},a'_n \}$ 
such that $\Psi'(a'_i)=a'_{i-2}$ for any $i=3,\ldots,n$.
\item[\textnormal{(3)}] There exists 
the isomorphism $\phi_1:A-a_1 \to A'-a'_1$ 
such that $\phi_1(a_i)=a'_{n-i+2}$ for any $i=2,\ldots,n$.
\item[\textnormal{(4)}] There exists 
the isomorphism $\phi_2:A-a_n \to A'-a'_n$ 
such that $\phi_2(a_i)=a'_{n-i}$ for any $i=1,\ldots,n-1$.
\end{enumerate}
\end{Lemma}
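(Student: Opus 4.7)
The plan is to verify each of the four claims directly from the definition of $A(n;B,C)$. The single governing observation is this: in $A=A(n;B,C)$, two vertices $a_i,a_j$ with $i<j$ are adjacent if and only if $j-i\in B$ when $i$ is odd, and $j-i\in C$ when $i$ is even. Adjacency thus depends only on the parity of the smaller index and on the difference of the two indices; all four statements reduce to this rule plus careful bookkeeping between the two labelings of $A'$.

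For (1), I set $\Psi(a_i):=a_{i-2}$, which is a bijection from $\{a_3,\ldots,a_n\}$ onto $\{a_1,\ldots,a_{n-2}\}$. Since a shift by $2$ preserves parity and preserves differences, the adjacency criterion above applies identically to $(a_i,a_j)$ and to $(a_{i-2},a_{j-2})$, so $\Psi$ is a graph isomorphism. Statement (2) is the same statement applied to $A'=A(n;C,B)$: using (1) in the intrinsic $b$-numbering of $A'$ yields an isomorphism $b_i\mapsto b_{i-2}$ from $A'-\{b_1,b_2\}$ to $A'-\{b_{n-1},b_n\}$. Translating through $a'_i=b_{n-i+1}$, the excluded pairs swap roles ($\{b_1,b_2\}=\{a'_{n-1},a'_n\}$ and $\{b_{n-1},b_n\}=\{a'_1,a'_2\}$) and the map becomes $a'_j\mapsto a'_{j+2}$; its inverse is the desired $\Psi'$.

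For (3) and (4), writing $\phi_1(a_i)=a'_{n-i+2}=b_{i-1}$ and $\phi_2(a_i)=a'_{n-i}=b_{i+1}$, the prescribed shift by $\pm 1$ in the $b$-indexing reverses parity. This parity flip is exactly compensated by the swap of the two parameter sets in passing from $A(n;B,C)$ to $A(n;C,B)$: for example, an edge $[a_i,a_j]$ with $i$ odd exists iff $j-i\in B$, while the image edge $[b_{i-1},b_{j-1}]$ in $A'$ has smaller index $i-1$ which is even, so in $A'$ it exists iff $j-i\in B$ as well; the case $i$ even is symmetric, and the analogous check handles $\phi_2$. The only genuine obstacle is notational — one must consistently track both the $a'$-labels (in which the statement is phrased) and the $b$-labels (in which adjacency of $A'$ is most directly expressed) — but once the parity/difference/swap correspondence is formalized, each of the four verifications amounts to a one-line check.
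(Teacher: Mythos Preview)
Your proof is correct and is precisely the approach the paper has in mind: the paper does not spell out a proof of this lemma at all, stating only that it follows ``by the definition of $A(n;B,C)$''. Your argument unpacks exactly that---the adjacency rule depends only on the parity of the smaller index and the difference of indices, so a shift by $2$ (parts (1)--(2)) preserves adjacency in either graph, while a shift by $\pm 1$ (parts (3)--(4)) swaps parities and is compensated by the swap $B\leftrightarrow C$ between $A$ and $A'$---and the bookkeeping between the $a'$- and $b$-labelings is handled correctly.
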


\begin{Definition}
Let $A:=A(n;B,C)$ where $n$ is a number at least $3$ 
and $B$ and $C$ are subsets of the set $\{1,\ldots,n-1 \}$.
Then we define the numbers $\beta(A)$ and $\gamma(A)$ as follows;
\begin{align*}
&\beta(A):= \#\{ b\in B \,|\, n-b \ \text{is odd} \} \ \text{and} \\
&\gamma(A):=\#\{ c\in C \,|\, n-c \ \text{is odd} \}.
\end{align*}
Here we note that [$n-b$ is odd] if and only if 
[\ [$b$ is even if $n$ is odd] and [$b$ is odd if $n$ is even]\ ].
\end{Definition}

For a vertex $v$ of a graph $\Gamma$, 
we denote $\deg_\Gamma v$ as the degree of the vertex $v$ in the graph $\Gamma$.
If $\Gamma$ and $\Gamma'$ are finite simple graphs with at least three vertices 
satisfying the property~$(*)$, then 
it is well known that $\deg_\Gamma v = \deg_{\Gamma'} f(v)$ for all $v \in V(\Gamma)$.

\medskip

In Sections~\ref{sec5} and \ref{sec6}, 
we give some remarks and examples of $A=A(n;B,C)$ and $A'=A(n;C,B)$.
We obtain the following in Section~\ref{sec5}.

\begin{Lem5.7}
Let $A=A(n;B,C)$, $A'=A(n;C,B)$, 
$V(A)=\{a_1,a_n,\ldots,a_n \}$ and $V(A')=\{a'_1,a'_2,\ldots,a'_n \}$ 
as in Lemma~\ref{LemA}.
Then the following two statements are equivalent$:$
\begin{enumerate}
\item[\textnormal{(i)}] $\beta(A)=\gamma(A)$.
\item[\textnormal{(ii)}] $|E(A)|=|E(A')|$, $\deg_A a_1 = \deg_{A'} a'_1$ and $\deg_A a_n = \deg_{A'} a'_n$.
\end{enumerate}
\end{Lem5.7}

In Theorem~\ref{Thm6}, we show that 
for finite simple graphs $\Gamma$ and $\Gamma'$ with at least three vertices satisfying the property~$(*)$, 
if in the associated directed graph $\widetilde{\Gamma}$, 
\begin{enumerate}
\item[\textnormal{(II)}] for some $v_1,v_2\in V(\widetilde{\Gamma})$, 
there exist normal-arrows $v_1 \longrightarrow v_2$ and $v_2 \longrightarrow v_1$ 
in $\widetilde{\Gamma}$
\end{enumerate}
and if the cycle $v_1,v_2$ is $\alpha$-type, 
then $\Gamma$ and $\Gamma'$ have some structure $(\mathcal{F})$.

An example of $\Gamma$ and $\Gamma'$ with the structure $(\mathcal{F})$ 
is given in Figure~\ref{figure-exampleA}.
In this example, $A=A(6;\{1\},\{1\})$ and $A'=A(6;\{1\},\{1\})$.

\begin{figure}[h]
\centering
{
\vspace*{3mm}
\includegraphics[keepaspectratio, scale=0.90, bb=0 0 329 127]{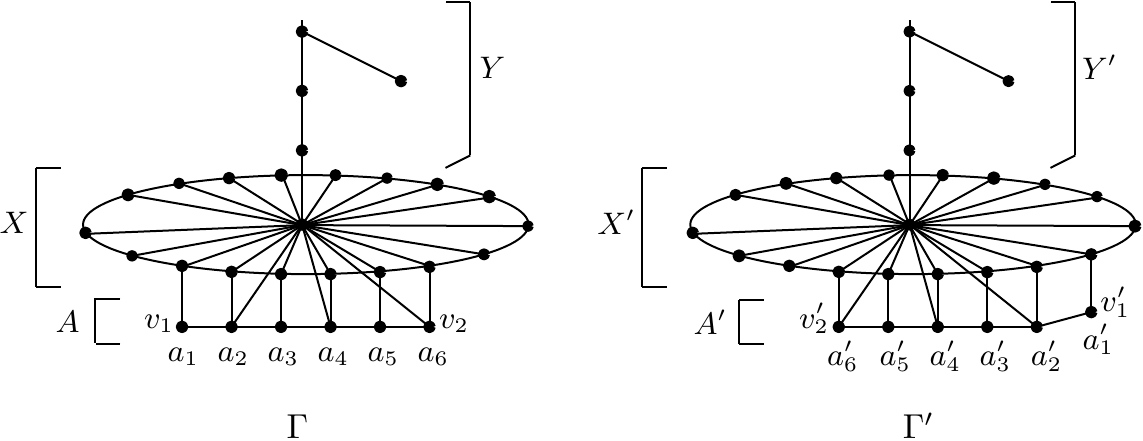}
}
\caption{An example of graphs $\Gamma$ and $\Gamma'$ with the structure $(\mathcal{F})$}\label{figure-exampleA}
\end{figure}

\smallskip

We introduce some examples and remarks on 
$A=A(n;B,C)$ and $A'=A(n;C,B)$ in Section~\ref{sec6} 
and finally we give an example and a remark on $\beta$-type cycles in Section~\ref{sec7}.

\section{Examples on the associated directed graphs}\label{sec2}

We give some examples of the associated directed graphs.

\begin{Example}\label{Example1}
Let $\Gamma$ and $\Gamma'$ be the graphs as Figure~\ref{figure1}~(I-1).
We define the map $f:V(\Gamma) \to V(\Gamma')$ as $f(v_i)=v'_i$ for $i=1,2,3,4,5$.
For each $v_i \in V(\Gamma)$, we define the isomorphism 
$\phi_{v_i}:\Gamma -v_i \to \Gamma' - v'_i$ as Figure~\ref{figure1}~(I-2).
Then the associated directed graph 
$\widetilde{\Gamma}=\widetilde{\Gamma}(\Gamma,\Gamma',f,\{\phi_v\}_{v\in V(\Gamma)})$ 
becomes as Figure~\ref{figure1}~(I-3).

Here we note that 
for each vertex $v \in V(\widetilde{\Gamma})$, 
the normal-arrows and the dashed-arrows 
arising from $v$ in $\widetilde{\Gamma}$ 
are determined by the isomorphism 
$\phi_{v}:\Gamma-v \to \Gamma'-f(v)$.

In this example, 
there are no normal-arrows arising from $v_2$ (and no dashed-arrows arising from $v_2$).
Then there exists the isomorphism $\overline{\phi_{v_2}}:\Gamma \to \Gamma'$ 
as $\overline{\phi_{v_2}}|_{\Gamma - v_2}=\phi_{v_2}$ and $\overline{\phi_{v_2}}(v_2)=f(v_2)=v'_2$ 
(as Theorem~\ref{Thm1}).
\end{Example}

\begin{Example}\label{Example2}
Let $\Gamma$ and $\Gamma'$ be the graphs as Figure~\ref{figure2}~(II-1).
We define the map $f:V(\Gamma) \to V(\Gamma')$ as $f(v_i)=v'_i$ for $i=1,2,3,4$.
For each $v_i \in V(\Gamma)$, we define the isomorphism 
$\phi_{v_i}:\Gamma -v_i \to \Gamma' - v'_i$ as Figure~\ref{figure2}~(II-2).
Then the associated directed graph 
$\widetilde{\Gamma}=\widetilde{\Gamma}(\Gamma,\Gamma',f,\{\phi_v\}_{v\in V(\Gamma)})$ 
becomes as Figure~\ref{figure2}~(II-3).

In this example, 
there are no normal-arrows (and no dashed-arrows) arising from $v_3$ and 
this implies that 
there exists the isomorphism $\overline{\phi_{v_3}}:\Gamma \to \Gamma'$ 
as $\overline{\phi_{v_3}}|_{\Gamma - v_3}=\phi_{v_3}$ and $\overline{\phi_{v_3}}(v_3)=f(v_3)=v'_3$.
\end{Example}

\begin{Example}\label{Example3}
Let $\Gamma$ and $\Gamma'$ be the graphs as Figure~\ref{figure3}~(III-1) 
that are the same graphs in Example~\ref{Example2}.
We define the map $f:V(\Gamma) \to V(\Gamma')$ as $f(v_i)=v'_i$ for $i=1,2,3,4$.
For each $v_i \in V(\Gamma)$, we define the isomorphism 
$\phi_{v_i}:\Gamma -v_i \to \Gamma' - v'_i$ as Figure~\ref{figure3}~(III-2).
Then the associated directed graph 
$\widetilde{\Gamma}=\widetilde{\Gamma}(\Gamma,\Gamma',f,\{\phi_v\}_{v\in V(\Gamma)})$ 
becomes as Figure~\ref{figure3}~(III-3).

In this example, 
for every vertex $v \in V(\widetilde{\Gamma})$, 
there exists a normal-arrow (and a dashed-arrow) 
arising from $v$ in $\widetilde{\Gamma}$.
Also there are arrows $v_1 \longrightarrow v_4$, 
$v_4 \longrightarrow v_1$, 
$v_1 \dashrightarrow v_4$ and $v_4 \dashrightarrow v_1$ in $\widetilde{\Gamma}$.
\end{Example}

\begin{figure}
\centering
{
\includegraphics[keepaspectratio, scale=0.95, bb=0 0 381 563]{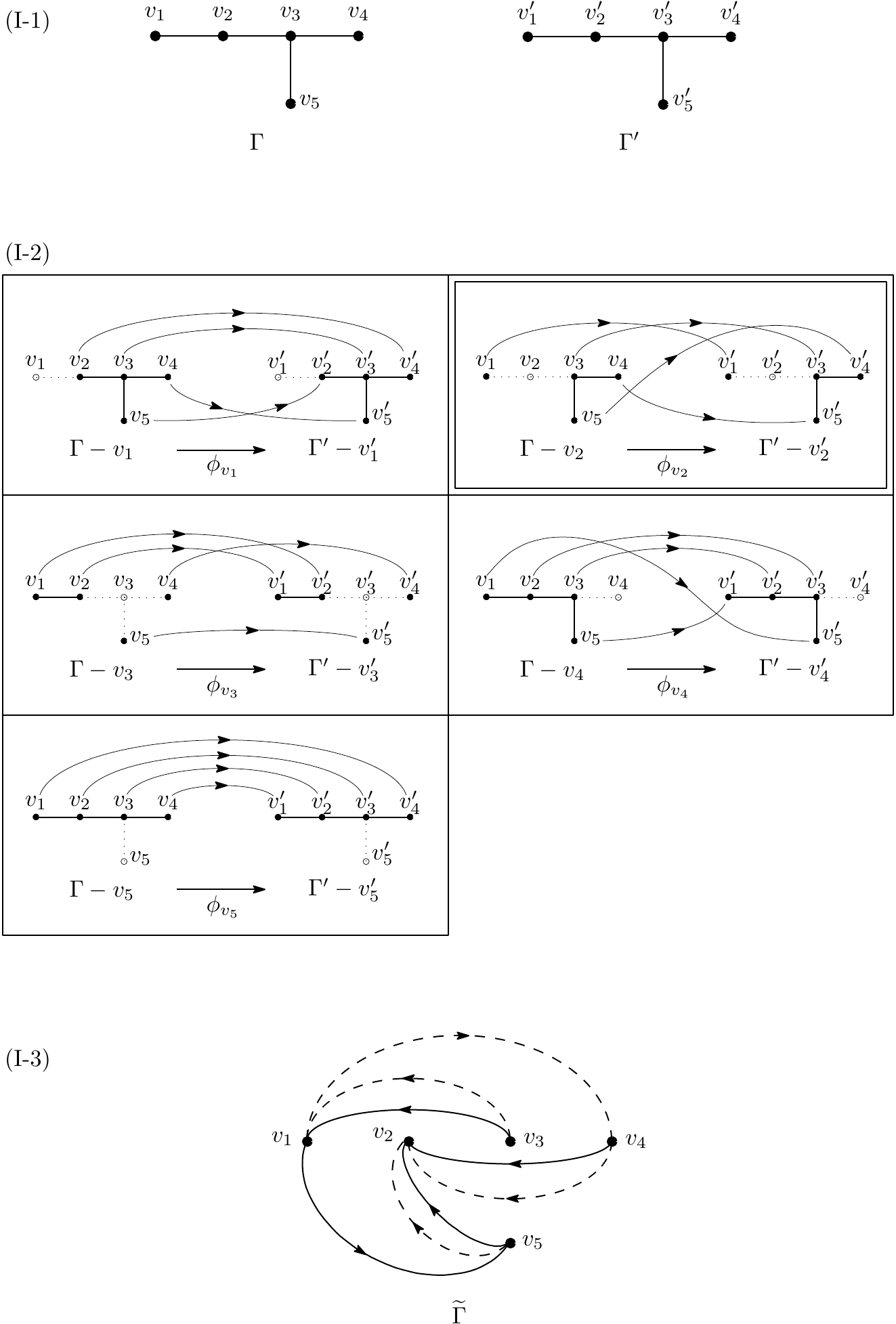}
}
\vspace*{5mm}
\caption{Example~2.1}
\label{figure1}
\vspace*{10mm}
\end{figure}

\begin{figure}
\centering
{
\includegraphics[keepaspectratio, scale=0.95, bb=0 0 360 338]{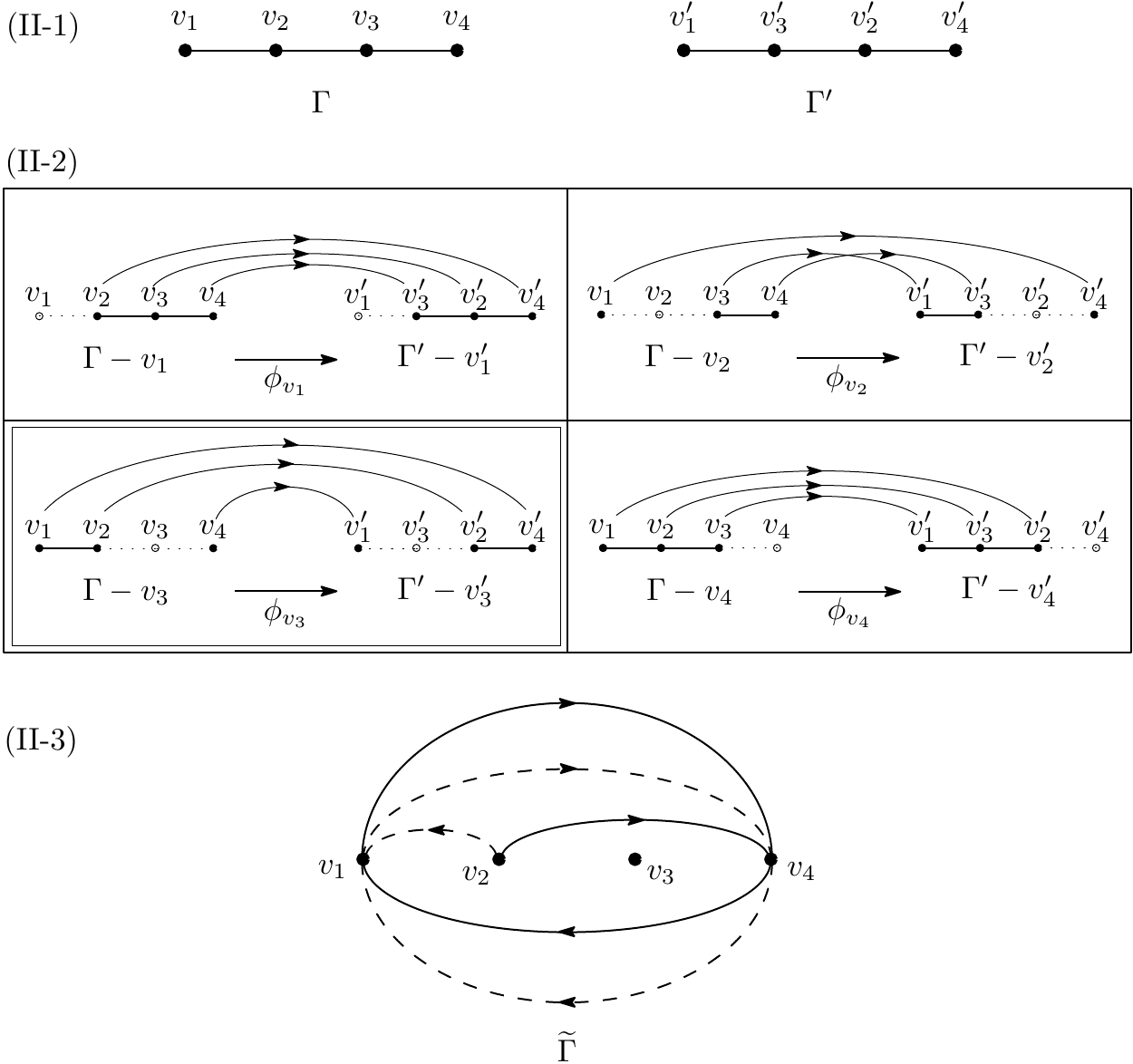}
}
\caption{Example~2.2}
\label{figure2}
\end{figure}

\begin{figure}
\centering
{
\includegraphics[keepaspectratio, scale=0.95, bb=0 0 360 338]{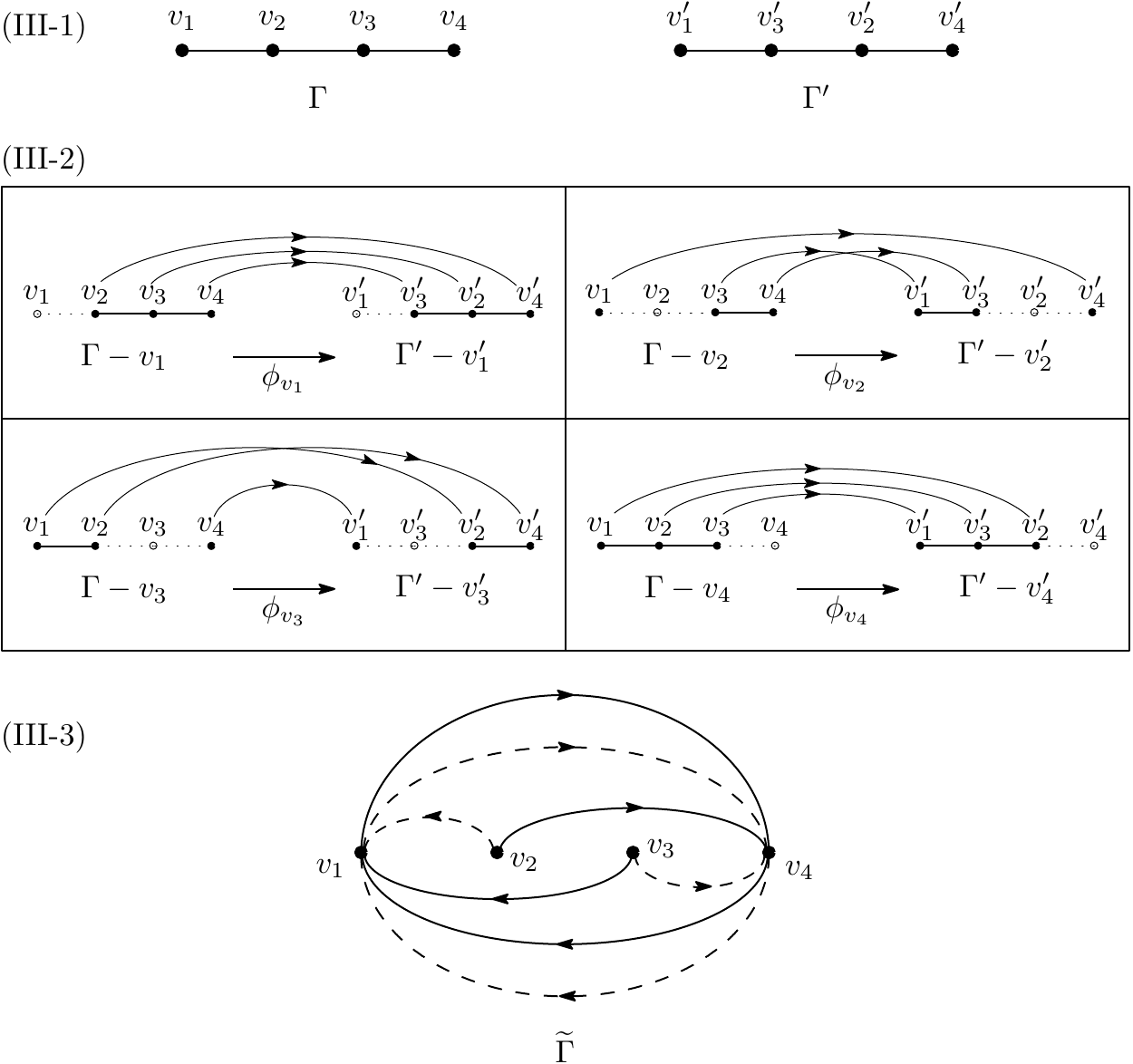}
}
\caption{Example~2.3}
\label{figure3}
\end{figure}

\section{On the associated directed graphs}\label{sec3}

Let $\Gamma$ and $\Gamma'$ be finite simple graphs with at least three vertices 
satisfying the property~$(*)$ and 
let $\widetilde{\Gamma}$ be the associated directed graph.
We investigate some properties of $\widetilde{\Gamma}$.
The following lemma is obtained immediately from the definition of $\widetilde{\Gamma}$.

\begin{Lemma}\label{lemma0}
\begin{enumerate}
\item[\textnormal{(1)}] For $v_1,v_2 \in V(\widetilde{\Gamma})$, 
if there exists a normal-arrow $v_1 \longrightarrow v_2$ in $\widetilde{\Gamma}$ 
then $[v_1,v_2] \not\in E(\Gamma)$.
\item[\textnormal{(2)}] For $v_1,v_2 \in V(\widetilde{\Gamma})$, 
if there exists a dashed-arrow $v_1 \dashrightarrow v_2$ in $\widetilde{\Gamma}$ 
then $[f(v_1),f(v_2)] \not\in E(\Gamma')$.
\end{enumerate}
\end{Lemma}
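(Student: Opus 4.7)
The plan is essentially trivial: both assertions are immediate consequences of the very definition of the arrows in $\widetilde{\Gamma}$ given in items~(2) and (3) of the construction. My only task is to recall those definitions and then read off exactly the required non-edge condition as the first conjunct of the defining biconditional in each case.

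For statement~(1), I would invoke item~(2) in the definition of $\widetilde{\Gamma}$: a normal-arrow $v_1 \longrightarrow v_2$ exists in $\widetilde{\Gamma}$ if and only if both $[v_1,v_2]\notin E(\Gamma)$ and $[f(v_1),\phi_{v_1}(v_2)]\in E(\Gamma')$ hold. The hypothesis that such an arrow exists therefore forces, in particular, the first of these two conjuncts, which is exactly the desired conclusion $[v_1,v_2]\notin E(\Gamma)$.

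For statement~(2), the argument is completely symmetric via item~(3): a dashed-arrow $v_1 \dashrightarrow v_2$ exists in $\widetilde{\Gamma}$ if and only if both $[f(v_1),f(v_2)]\notin E(\Gamma')$ and $[v_1,\phi_{v_1}^{-1}(f(v_2))]\in E(\Gamma)$ hold, so the hypothesis of (2) immediately yields the first conjunct $[f(v_1),f(v_2)]\notin E(\Gamma')$.

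Since each of (1) and (2) is just the projection of an ``if and only if'' onto one of its two conjuncts, there is no real obstacle to overcome; the lemma merely extracts the half of the defining biconditional which will be convenient to cite as a standalone fact in the later arguments, for instance when showing that a vertex with no outgoing normal-arrow can have the isomorphism $\phi_v$ extended to $v$ itself.
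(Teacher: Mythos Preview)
Your proposal is correct and matches the paper's own treatment: the paper simply states that the lemma ``is obtained immediately from the definition of $\widetilde{\Gamma}$'' and gives no further proof. Your explicit unpacking of items~(2) and~(3) of the definition to extract the first conjunct in each case is exactly the intended (trivial) argument.
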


We show a proposition.

\begin{Proposition}\label{Proposition1}
For all $v\in V(\widetilde{\Gamma})$, 
the number of normal-arrows arising from $v$ and 
the number of dashed-arrows arising from $v$ are equal in $\widetilde{\Gamma}$.
\end{Proposition}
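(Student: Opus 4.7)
The plan is to reparametrize both kinds of arrows out of $v$ so that they are indexed by the same set $V(\Gamma)\setminus\{v\}$, and then finish by a $2\times 2$ double-counting argument using the well-known fact that $\deg_\Gamma v=\deg_{\Gamma'} f(v)$.

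Fix $v\in V(\widetilde{\Gamma})=V(\Gamma)$. The normal-arrows leaving $v$ correspond, by the definition, to those $w\in V(\Gamma)\setminus\{v\}$ satisfying
\[
[v,w]\notin E(\Gamma)\quad\text{and}\quad [f(v),\phi_v(w)]\in E(\Gamma').
\]
For the dashed-arrows $v\dashrightarrow v_2$, I would substitute $w:=\phi_v^{-1}(f(v_2))$. Since $\phi_v:V(\Gamma)\setminus\{v\}\to V(\Gamma')\setminus\{f(v)\}$ and $f:V(\Gamma)\to V(\Gamma')$ are bijections, the map $v_2\mapsto w$ is a bijection from $V(\Gamma)\setminus\{v\}$ to itself, and the defining conditions $[f(v),f(v_2)]\notin E(\Gamma')$ and $[v,\phi_v^{-1}(f(v_2))]\in E(\Gamma)$ become exactly
\[
[v,w]\in E(\Gamma)\quad\text{and}\quad [f(v),\phi_v(w)]\notin E(\Gamma').
\]

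With this reparametrization I partition $V(\Gamma)\setminus\{v\}$ into the four cells
\begin{align*}
X_{11}&=\{w:[v,w]\in E(\Gamma),\ [f(v),\phi_v(w)]\in E(\Gamma')\},\\
X_{10}&=\{w:[v,w]\in E(\Gamma),\ [f(v),\phi_v(w)]\notin E(\Gamma')\},\\
X_{01}&=\{w:[v,w]\notin E(\Gamma),\ [f(v),\phi_v(w)]\in E(\Gamma')\},\\
X_{00}&=\{w:[v,w]\notin E(\Gamma),\ [f(v),\phi_v(w)]\notin E(\Gamma')\}.
\end{align*}
By the previous paragraph, the number of normal-arrows out of $v$ equals $|X_{01}|$ and the number of dashed-arrows out of $v$ equals $|X_{10}|$. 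Hence the proposition reduces to showing $|X_{10}|=|X_{01}|$, i.e.\ that the row sum and column sum agree:
\[
|X_{11}|+|X_{10}|=\deg_\Gamma v,\qquad |X_{11}|+|X_{01}|=\deg_{\Gamma'} f(v),
\]
where the second identity uses that $\phi_v$ sends $V(\Gamma)\setminus\{v\}$ bijectively onto $V(\Gamma')\setminus\{f(v)\}$, so counting $w$ with $\phi_v(w)$ adjacent to $f(v)$ is the same as counting the neighbours of $f(v)$ in $\Gamma'$.

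The last ingredient is $\deg_\Gamma v=\deg_{\Gamma'} f(v)$, which the excerpt cites as well known. I would either quote this directly or deduce it via Kelly's identity: from $\Gamma-v\cong\Gamma'-f(v)$ we get $|E(\Gamma-v)|=|E(\Gamma'-f(v))|$ for every $v$, summing these equalities gives $|E(\Gamma)|=|E(\Gamma')|$, and then $\deg_\Gamma v=|E(\Gamma)|-|E(\Gamma-v)|=|E(\Gamma')|-|E(\Gamma'-f(v))|=\deg_{\Gamma'} f(v)$. Subtracting the two row/column-sum identities then yields $|X_{10}|=|X_{01}|$. The only genuinely non-trivial step is the bijective reparametrization of dashed-arrows; everything else is a one-line double count.
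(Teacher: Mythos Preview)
Your argument is correct and is essentially the same as the paper's: the paper also reparametrizes via $u'=\phi_v(w)$ (and $u'=f(u)$ for the dashed side), splits the neighbourhood of $v$ into the same four pieces (its $P_v,Q_v,P'_{v'},Q'_{v'}$ are your $|X_{11}|,|X_{10}|,|X_{11}|,|X_{01}|$), and concludes $Q_v=Q'_{v'}$ from $\deg_\Gamma v=\deg_{\Gamma'} f(v)$. Your explicit $2\times 2$ table is a slightly tidier packaging of the identical double count.
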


\begin{proof}
For all $v\in V(\Gamma)$, 
$\deg_{\Gamma} v =\deg_{\Gamma'} f(v)$ holds.

Let $v$ be a fixed vertex of $\Gamma$ and let $v':=f(v)\in V(\Gamma')$.
Then the degree $\deg_{\Gamma} v$ is the sum of the number $P_v$ of edges 
$[v,w]$ as $[v',\phi_{v} (w)]\in E(\Gamma')$ and 
the number $Q_v$ of edges $[v,w]$ as $[v',\phi_{v} (w)]\not\in E(\Gamma')$.
Also, the degree $\deg_{\Gamma'} v'$ is the sum of the number $P'_{v'}$ of edges 
$[v',u']$ as $[v,\phi_{v}^{-1} (u')]\in E(\Gamma)$ and 
the number $Q'_{v'}$ of edges $[v',u']$ as $[v,\phi_{v}^{-1}(u')]\not\in E(\Gamma)$.
Hence
\begin{align*}
&P_v = \# \{ w\in V(\Gamma-v) \,|\, [v,w]\in E(\Gamma) \ \text{and}\ [v',\phi_{v}(w)]\in E(\Gamma')\}, \\
&Q_v = \# \{ w\in V(\Gamma-v) \,|\, [v,w] \in E(\Gamma) \ \text{and}\ [v',\phi_{v}(w)] \not\in E(\Gamma')\}, \\
&P'_{v'} = \# \{ u'\in V(\Gamma'-v') \,|\, [v',u']\in E(\Gamma') \ \text{and}\ [v,\phi^{-1}_{v} (u')]\in E(\Gamma)\}, \\
&Q'_{v'} = \# \{ u'\in V(\Gamma'-v') \,|\, [v',u']\in E(\Gamma') \ \text{and}\ [v,\phi_{v}^{-1} (u')]\not\in E(\Gamma)\}, \\
&\deg_{\Gamma} v=P_v + Q_v \ \text{and} \\
&\deg_{\Gamma'} v'=P'_{v'} + Q'_{v'}.
\end{align*}

Let $w\in V(\Gamma-v)$ and let $u':=\phi_{v}(w)$. 
Here $u'\in V(\Gamma'-v')$.
Then $[v',\phi_{v}(w)]\in E(\Gamma')$ if and only if $[v',u']\in E(\Gamma')$.
Also $[v,w]\in E(\Gamma)$ if and only if $[v,\phi^{-1}_{v} (u')]\in E(\Gamma)$.

Hence $P_v =P'_{v'}$ and 
\[ Q_v=\deg_{\Gamma}v-P_v=\deg_{\Gamma'}v'-P'_{v'}= Q'_{v'}. \]
Also
\begin{align*}
Q_v &= \# \{ u'\in V(\Gamma'-v') \,|\, [v,\phi^{-1}_{v} (u')] \in E(\Gamma) \ \text{and}\ [v',u']\not\in E(\Gamma') \} \\
&= \# \{ u\in V(\Gamma-v)\,|\, v \dashrightarrow u \ \text{in}\ \widetilde{\Gamma} \}
\end{align*}
and
\begin{align*}
Q'_{v'} &= \# \{ w\in V(\Gamma-v) \,|\, [v',\phi_{v} (w)]\in E(\Gamma') \ \text{and}\ [v,w]\not\in E(\Gamma) \} \\
&= \# \{ w\in V(\Gamma-v)\,|\, v \longrightarrow w \ \text{in}\  \widetilde{\Gamma} \}.
\end{align*}
Thus, 
the number of normal-arrows arising from $v$ and 
the number of dashed-arrows arising from $v$ are equal in $\widetilde{\Gamma}$.
\end{proof}

We show the following theorem.
This is the first motivation 
for considering the associated directed graph $\widetilde{\Gamma}$.

\begin{Theorem}\label{Thm1}
For any vertex $v \in V(\Gamma)=V(\widetilde{\Gamma})$, 
the following two statements are equivalent$:$
\begin{enumerate}
\item[\textnormal{(i)}] 
There are no normal-arrows arising from $v$ (and no dashed-arrows arising from $v$) in 
the associated directed graph $\widetilde{\Gamma}$.
\item[\textnormal{(ii)}] 
The isomorphism $\phi_{v}:\Gamma-v \to \Gamma'-f(v)$ 
extends to the isomorphism $\overline{\phi_{v}}:\Gamma \to \Gamma'$ 
as $\overline{\phi_{v}}|_{\Gamma-v}=\phi_{v}$ and $\overline{\phi_{v}}(v)=f(v)$.
\end{enumerate}
\end{Theorem}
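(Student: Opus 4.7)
The plan is to prove the equivalence directly from the definition of normal-arrows, with the only nontrivial ingredient being a cardinality/degree argument for the direction (i) $\Rightarrow$ (ii). The direction (ii) $\Rightarrow$ (i) is essentially a restatement of the definition: if $\overline{\phi_v}:\Gamma\to\Gamma'$ is an isomorphism extending $\phi_v$ with $\overline{\phi_v}(v)=f(v)$, then for any $w\neq v$ with $[v,w]\notin E(\Gamma)$ we have $[f(v),\phi_v(w)]=[\overline{\phi_v}(v),\overline{\phi_v}(w)]\notin E(\Gamma')$, so there is no normal-arrow $v\longrightarrow w$. By Proposition~\ref{Proposition1} there are then no dashed-arrows arising from $v$ either, though this is not actually needed for this direction.

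For (i) $\Rightarrow$ (ii), I would define $\overline{\phi_v}:V(\Gamma)\to V(\Gamma')$ by $\overline{\phi_v}|_{V(\Gamma-v)}=\phi_v$ and $\overline{\phi_v}(v)=f(v)$. This is a bijection, since $\phi_v:V(\Gamma-v)\to V(\Gamma'-f(v))$ is a bijection and $f(v)\notin \phi_v(V(\Gamma-v))$. Every edge of $\Gamma$ not incident to $v$ is an edge of $\Gamma-v$, and is carried to an edge of $\Gamma'-f(v)\subseteq \Gamma'$ by $\phi_v$, and conversely; so $\overline{\phi_v}$ already preserves all such edges. It therefore suffices to show that the edges incident to $v$ are preserved, i.e., that for every $w\in V(\Gamma)\setminus\{v\}$,
\begin{equation*}
[v,w]\in E(\Gamma)\ \Longleftrightarrow\ [f(v),\phi_v(w)]\in E(\Gamma').
\end{equation*}

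Set $S:=\{w\in V(\Gamma)\setminus\{v\}:[v,w]\in E(\Gamma)\}$ and $T:=\{w\in V(\Gamma)\setminus\{v\}:[f(v),\phi_v(w)]\in E(\Gamma')\}$. The hypothesis that no normal-arrow arises from $v$ is, by the definition of normal-arrow, exactly the statement that there is no $w$ with $w\in T\setminus S$; i.e., $T\subseteq S$. On the other hand, $|S|=\deg_\Gamma v$, and since $\phi_v$ is a bijection from $V(\Gamma)\setminus\{v\}$ onto $V(\Gamma')\setminus\{f(v)\}$, we have $|T|=\deg_{\Gamma'}f(v)$. Using the standard fact (already noted in the paper) that $\deg_\Gamma v=\deg_{\Gamma'}f(v)$ whenever $(*)$ holds, we conclude $|S|=|T|$, and combined with $T\subseteq S$ this forces $S=T$, which is the desired biconditional.

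The whole argument is short and the only real step is the counting/cardinality reduction in the last paragraph; I do not anticipate any obstacle, as the degree identity does all the work of upgrading the one-sided implication supplied by the normal-arrow hypothesis to a two-sided one.
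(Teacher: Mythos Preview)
Your proof is correct and follows essentially the same approach as the paper: both directions hinge on the degree identity $\deg_\Gamma v=\deg_{\Gamma'}f(v)$ to upgrade the one-sided implication given by the absence of normal-arrows to the full biconditional on edges at $v$. The only cosmetic difference is that the paper routes through Proposition~\ref{Proposition1} (using $Q_v=Q'_{v'}=0$ to obtain both inclusions $S\subseteq T$ and $T\subseteq S$ separately), whereas you use $T\subseteq S$ together with $|S|=|T|$ directly; these are the same counting argument in slightly different packaging.
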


\begin{proof}
Let $v':=f(v)$.
We suppose that (i) holds.
Then by Proposition~\ref{Proposition1}, 
there are no dashed-arrows arising from $v$, 
$Q_v=Q'_{v'}=0$ and $\deg_{\Gamma} v = P_v = P'_{v'}=\deg_{\Gamma'} v'$.
Hence for $w\in V(\Gamma-v)$, 
$[v,w]\in E(\Gamma)$ if and only if $[v',\phi_{v}(w)]\in E(\Gamma')$.
Thus we can obtain the isomorphism $\overline{\phi_{v}}:\Gamma \to \Gamma'$ 
such that $\overline{\phi_{v}}|_{\Gamma - v}=\phi_{v}$ and $\overline{\phi_{v}}(v)=f(v)=v'$.

We suppose that (ii) holds.
Then for any $w\in V(\Gamma-v)$, 
$[v,w]\in E(\Gamma)$ if and only if $[v',\phi_{v}(w)]\in E(\Gamma)$.
By the definition of $\widetilde{\Gamma}$, 
the statement (i) holds.
\end{proof}

\section{On cycles with alternate normal-arrows and dashed-arrows in $\widetilde{\Gamma}$}\label{sec4}

We suppose that 
there does not exist a vertex $v \in V(\widetilde{\Gamma})$ 
with no normal-arrows arising from $v$ in 
the associated directed graph $\widetilde{\Gamma}$.
Then by the argument in Section~\ref{sec1}, there exist a positive number $k\in {\mathbb{N}}$ and 
a sequence $v_1,\ldots,v_{2k} \in V(\widetilde{\Gamma})$ 
such that there exist arrows $v_{2i-1} \longrightarrow v_{2i}$ and 
$v_{2i} \dashrightarrow v_{2i+1}$ in $\widetilde{\Gamma}$ 
for any $i=1,\ldots,k$, where $v_{2k+1}:=v_1$ 
by Proposition~\ref{Proposition1} (see Figure~\ref{figure-cycle}), 
because $\widetilde{\Gamma}$ is finite.

We define $\alpha$-type and $\beta$-type for such a cycle $v_1,\ldots,v_{2k}$.

\begin{Definition}\label{def:type}
We consider a sequence $\{b'_i\}$ in $V(\Gamma')$ as
\begin{align*}
&b'_1:=v'_1, \\
&b'_2:=\phi_{v_1}(v_2), \\
&b'_3:=\phi_{v_1}\circ \phi^{-1}_{v_2}(v'_3), \\
&b'_4:=\phi_{v_1}\circ \phi^{-1}_{v_2}\circ\phi_{v_3}(v_4), \\
&b'_5:=\phi_{v_1}\circ \phi^{-1}_{v_2}\circ\phi_{v_3}\circ \phi^{-1}_{v_4}(v'_5), \\
&\cdots \\
&b'_{2k}:=\phi_{v_1}\circ \phi^{-1}_{v_2}\circ\phi_{v_3}\circ \phi^{-1}_{v_4}
\circ\cdots \circ \phi_{v_{2k-1}}(v_{2k})=\overline{\phi}_{2k-1}(v_{2k}), \\
&b'_{2k+1}:=\overline{\phi}_{2k-1}\circ \phi^{-1}_{v_{2k}}(v'_1)=\Psi_0(b'_1), \\
&b'_{2k+2}:=\Psi_0(b'_2), \\
&b'_{2k+3}:=\Psi_0(b'_3), \\
&b'_{2k+4}:=\Psi_0(b'_4), \\
&\cdots \\
&b'_{4k}:=\Psi_0(b'_{2k}), \\
&b'_{4k+1}:=(\Psi_0)^2(b'_1), \\
&b'_{4k+2}:=(\Psi_0)^2(b'_2), \\
&b'_{4k+3}:=(\Psi_0)^2(b'_3), \\
&b'_{4k+4}:=(\Psi_0)^2(b'_4), \\
&\cdots,
\end{align*}
where 
$\overline{\phi}_{2k-1}:=\phi_{v_1}\circ \phi^{-1}_{v_2}\circ \phi_{v_3}
\circ \phi^{-1}_{v_4}\circ \cdots \circ \phi_{v_{2k-1}}$ 
and $\Psi_0:=\overline{\phi}_{2k-1} \circ \phi^{-1}_{v_{2k}} 
=\phi_{v_1}\circ \phi^{-1}_{v_2}\circ \phi_{v_3}
\circ \phi^{-1}_{v_4}\circ \cdots \circ \phi_{v_{2k-1}}\circ \phi^{-1}_{v_{2k}}$.

Here if $b'_i$ is not defined, then we consider $b'_i$ is blank 
and we denote $b'_i=\square$.

For example, 
$b'_5=\phi_{v_1}\circ \phi^{-1}_{v_2}\circ \phi_{v_3}\circ \phi^{-1}_{v_4}(v'_5)$ 
is defined if and only if $v'_5 \neq v'_4$, $\phi^{-1}_{v_4}(v'_5)\neq v_3$, $\phi_{v_3}\circ \phi^{-1}_{v_4}(v'_5)\neq v'_2$ 
and $\phi^{-1}_{v_2}\circ \phi_{v_3}\circ \phi^{-1}_{v_4}(v'_5) \neq v_1$.
Here we note that $v'_5 \neq v'_4$ and $\phi^{-1}_{v_4}(v'_5)\neq v_3$ always hold.
Hence in this case, $b'_5$ is blank if and only if 
$\phi_{v_3}\circ \phi^{-1}_{v_4}(v'_5)= v'_2$ or 
$\phi^{-1}_{v_2}\circ \phi_{v_3}\circ \phi^{-1}_{v_4}(v'_5) = v_1$.

Let ${b'}^{(0)}_i:=b'_i$ ($i=1,2,\ldots$) and 
let $A'_0:=\{ {b'}^{(0)}_i \,|\, i=1,2,\ldots \}$.

Similarly, we consider a sequence $\{b_i\}$ in $V(\Gamma)$ as
\begin{align*}
&b_1:=v_2, \\
&b_2:=\phi^{-1}_{v_2}(v'_3), \\
&b_3:=\phi^{-1}_{v_2}\circ\phi_{v_3}(v_4), \\
&b_4:=\phi^{-1}_{v_2}\circ\phi_{v_3}\circ \phi^{-1}_{v_4}(v'_5), \\
&b_5:=\phi^{-1}_{v_2}\circ\phi_{v_3}\circ \phi^{-1}_{v_4}\circ\phi_{v_5}(v_6), \\
&\cdots \\
&b_{2k-1}:=\phi^{-1}_{v_2}\circ\phi_{v_3}\circ \phi^{-1}_{v_4}\circ\cdots \circ \phi_{v_{2k-1}}(v_{2k}), \\
&b_{2k}:=\phi^{-1}_{v_2}\circ\phi_{v_3}\circ \phi^{-1}_{v_4}\circ\cdots \circ \phi_{v_{2k-1}}\circ \phi_{v_{2k}}^{-1}(v'_{1})
=\overline{\phi}_{2k}(v'_{1}), \\
&b_{2k+1}:=\overline{\phi}_{2k}\circ \phi_{v_{1}}(v_2)=\Psi'_0(b_1), \\
&b_{2k+2}:=\Psi'_0(b_2), \\
&b_{2k+3}:=\Psi'_0(b_3), \\
&b_{2k+4}:=\Psi'_0(b_4), \\
&\cdots \\
&b_{4k}:=\Psi'_0(b_{2k}), \\
&b_{4k+1}:=(\Psi'_0)^2(b_1), \\
&b_{4k+2}:=(\Psi'_0)^2(b_2), \\
&b_{4k+3}:=(\Psi'_0)^2(b_3), \\
&b_{4k+4}:=(\Psi'_0)^2(b_4), \\
&\cdots,
\end{align*}
where 
$\overline{\phi}_{2k}:=\phi^{-1}_{v_2}\circ\phi_{v_3}\circ \phi^{-1}_{v_4}\circ\cdots 
\circ \phi_{v_{2k-1}}\circ \phi_{v_{2k}}^{-1}$ 
and $\Psi'_0:=\overline{\phi}_{2k}\circ \phi_{v_{1}} 
=\phi^{-1}_{v_2}\circ\phi_{v_3}\circ \phi^{-1}_{v_4}\circ\cdots 
\circ \phi_{v_{2k-1}}\circ \phi_{v_{2k}}^{-1} \circ \phi_{v_{1}}$.

Let ${b}^{(1)}_i:=b_i$ ($i=1,2,\ldots$) and 
let $A_1:=\{ {b}^{(1)}_i \,|\, i=1,2,\ldots \}$.

Since $v_1,\ldots,v_{2k}$ is a cycle, 
similarly we can define the sequences 
$A'_{2t}=\{ {b'}^{(2t)}_{i} \,|\, i=1,2,\ldots \}$ and 
$A_{2t+1}=\{ b^{(2t+1)}_{i} \,|\, i=1,2,\ldots \}$ for $t=0,1,\ldots,k-1$ 
(where if ${b'}^{(2t)}_{i}$ (and $b^{(2t+1)}_{i}$) is not defined then we consider 
${b'}^{(2t)}_{i}$ (and $b^{(2t+1)}_{i}$) is blank).
Here if $b=\square$ then $\phi_{v_{2t}}(b)=\square$ and $\phi_{v_{2t+1}}^{-1}(b)=\square$.

We say that the cycle $v_1,\ldots,v_{2k}$ is \textit{$\alpha$-type}, 
if there exists a positive number $n_0$ such that for any $t=0,1,\ldots,k-1$, 
\begin{enumerate}
\item[$\cdot$] ${b'}^{(2t)}_{i} \neq \square$ and ${b}^{(2t+1)}_{i} \neq \square$ for all $1 \le i \le n_0$,
\item[$\cdot$] ${b'}^{(2t)}_{i} = \square$ and ${b}^{(2t+1)}_{i} = \square$ for all $i \ge n_0+1$ and 
\item[$\cdot$] ${b'}^{(2t)}_{n_0} = v'_{2t}$ and ${b}^{(2t+1)}_{n_0} = v_{2t+1}$,
\end{enumerate}
where $v'_0=v'_{2k}$ and $A'_0=A'_{2k}$.
In the otherwise case, if there is no such a positive number $n_0$, 
then the cycle $v_1,\ldots,v_{2k}$ is said to be \textit{$\beta$-type}.

If the cycle $v_1,\ldots,v_{2k}$ is \textit{$\alpha$-type} 
then for all $t=1,\ldots,k$, 
$A'_{2t}=\phi_{v_{2t}}(A_{2t-1}-\{v_{2t}\}) \cup \{ v'_{2t} \}$ 
and $A_{2t+1} = \phi_{v_{2t+1}}^{-1} (A'_{2t}-\{v'_{2t+1}\}) \cup \{ v_{2t+1} \}$ hold (see Figure~\ref{figureZ1}).

\smallskip

We also consider a cycle $v_1,v_2 \in V(\widetilde{\Gamma})$ such that 
normal-arrows $v_1 \longrightarrow v_2$ and $v_2 \longrightarrow v_1$ are in $\widetilde{\Gamma}$.
Then similarly we define \textit{$\alpha$-type} and \textit{$\beta$-type} for the cycle $v_1,v_2$ 
by the above corresponding sequences $\{ b'_i \}$ and $\{ b_i \}$.
\end{Definition}

\begin{figure}
\centering
{
\includegraphics[keepaspectratio, scale=0.90, bb=0 0 397 277]{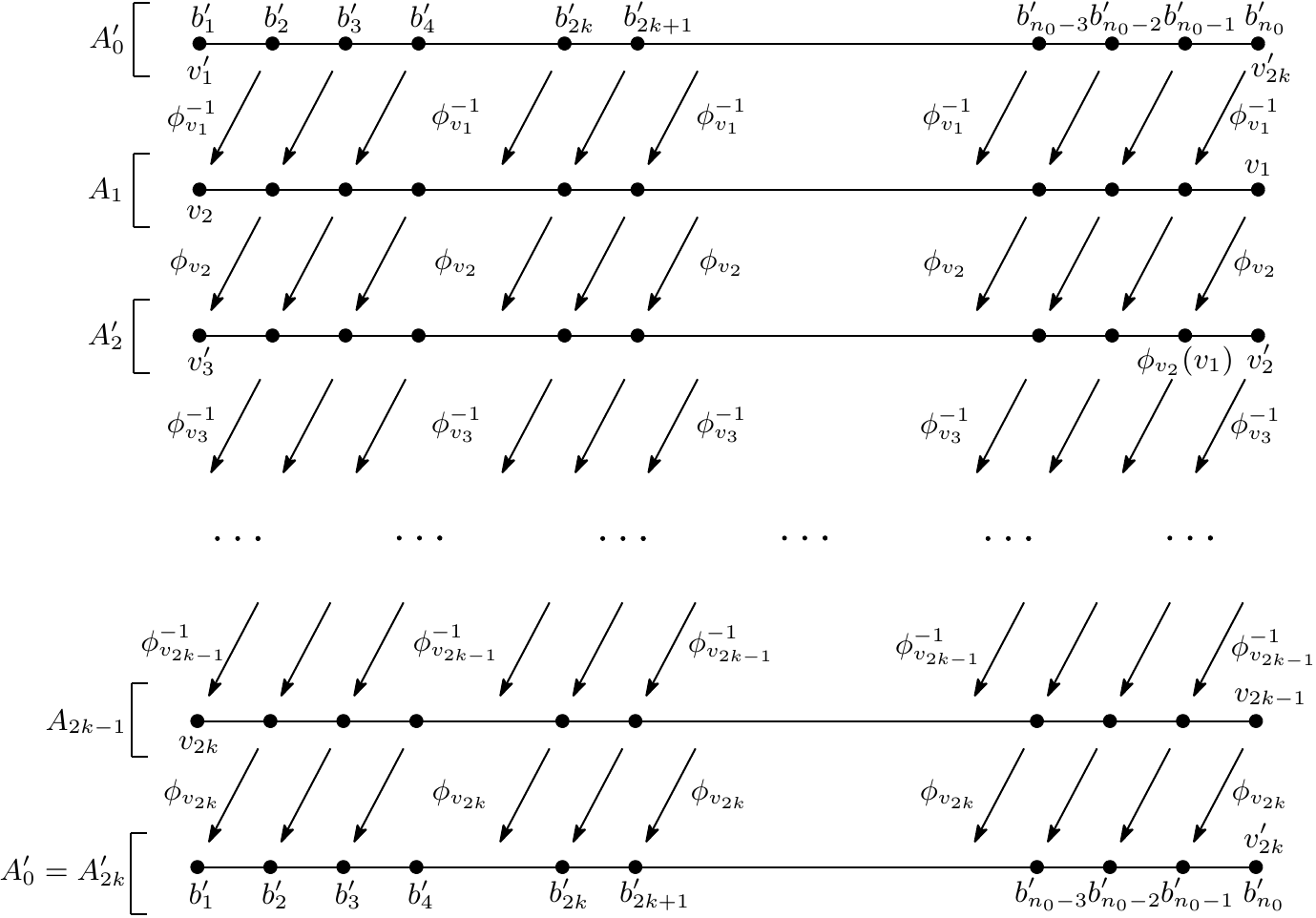}
\vspace*{-2mm}
}
\caption{$A'_{2i}$ and $A_{2i+1}$}
\label{figureZ1}
\end{figure}

We obtain the following propostion.

\begin{Proposition}\label{Prop3}
If for some positive number $k\in {\mathbb{N}}$ and 
some sequence $v_1,\ldots,v_{2k} \in V(\widetilde{\Gamma})$, 
there exist arrows $v_{2i-1} \longrightarrow v_{2i}$ and 
$v_{2i} \dashrightarrow v_{2i+1}$ in $\widetilde{\Gamma}$ 
for any $i=1,\ldots,k$ (where $v_{2k+1}:=v_1$) 
and if the cycle $v_1,\ldots,v_{2k}$ is $\alpha$-type, 
then there exist normal-arrows $v_1 \longrightarrow v_2$ and $v_2 \longrightarrow v_1$ 
in $\widetilde{\Gamma}$.
\end{Proposition}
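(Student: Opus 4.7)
The plan is to note that the first half of the required arrow $v_2 \longrightarrow v_1$, namely $[v_2,v_1]\not\in E(\Gamma)$, is immediate from the given arrow $v_1 \longrightarrow v_2$, and to devote the real work to establishing $[f(v_2),\phi_{v_2}(v_1)]\in E(\Gamma')$. The idea is to exhibit this edge as the \emph{last} consecutive pair of a walk in $\Gamma'$ traced out by the shifted sequence $A'_2=\{{b'}^{(2)}_i\}$. By the $\alpha$-type hypothesis (taken at $t=1$), this sequence has exactly $n_0$ non-blank entries, it starts at ${b'}^{(2)}_1=v'_3$, and it ends at ${b'}^{(2)}_{n_0}=v'_2=f(v_2)$.

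The first step is to show by induction on $i$ that every consecutive pair $[{b'}^{(2)}_i,{b'}^{(2)}_{i+1}]$ with $1\le i\le n_0-1$ lies in $E(\Gamma')$. Each new entry is obtained by prepending one further factor of the composition $\phi_{v_3}\circ \phi_{v_4}^{-1}\circ \phi_{v_5}\circ \cdots$ to a ``raw'' vertex of the form $v_{2+i}$ or $v'_{2+i}$. The corresponding arrow in the (cyclically shifted) cycle---a normal-arrow $v_{2j-1}\longrightarrow v_{2j}$ or a dashed-arrow $v_{2j}\dashrightarrow v_{2j+1}$---supplies, at the innermost level, an edge in $E(\Gamma')$ or in $E(\Gamma)$; transporting that edge through the common prefix of isomorphisms preserves adjacency. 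The $\alpha$-type condition is exactly what guarantees that at every intermediate stage both endpoints remain in the domain of the next isomorphism to be applied.

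The second step is to identify the penultimate vertex ${b'}^{(2)}_{n_0-1}$ with $\phi_{v_2}(v_1)$. Comparing the definitions of $A_1$ and $A'_2$ yields the shift identity $\phi_{v_2}({b}^{(1)}_j)={b'}^{(2)}_{j-1}$ for all $2\le j\le n_0$, since the outermost factor of the formula defining ${b}^{(1)}_j$ is precisely $\phi_{v_2}^{-1}$, which $\phi_{v_2}$ cancels. Applying this at $j=n_0$, together with the $\alpha$-type identity ${b}^{(1)}_{n_0}=v_1$ (the case $t=0$), gives $\phi_{v_2}(v_1)={b'}^{(2)}_{n_0-1}$. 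Combining with the walk from the first step produces the desired edge $[\phi_{v_2}(v_1),f(v_2)]\in E(\Gamma')$, which is exactly the second condition for the normal-arrow $v_2\longrightarrow v_1$.

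The main obstacle I expect is the bookkeeping inside the inductive step: one must verify that, as the transported edge climbs through the composition $\phi_{v_3}\circ \phi_{v_4}^{-1}\circ \cdots$, both of its endpoints simultaneously avoid the forbidden vertex of the next isomorphism. This is exactly what the non-blankness clause of the $\alpha$-type definition is designed to deliver, so the argument should reduce, after a careful translation of the $\alpha$-type condition into statements about the domains of the compositions, to a routine verification. Once this is in hand, the shift identity in the second step is essentially a one-line cancellation and the conclusion follows.
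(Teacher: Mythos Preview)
Your proposal is correct and follows essentially the same route as the paper's own proof: establish that consecutive terms of the shifted sequence $A'_2=\{{b'}^{(2)}_i\}$ span edges in $\Gamma'$ by transporting the raw edges supplied by the arrows of the cycle through the compositions $\phi_{v_3}\circ\phi_{v_4}^{-1}\circ\cdots$, then read off the final edge $[{b'}^{(2)}_{n_0-1},{b'}^{(2)}_{n_0}]=[\phi_{v_2}(v_1),v'_2]$ using the $\alpha$-type endpoint identities. Your shift identity $\phi_{v_2}(b^{(1)}_j)={b'}^{(2)}_{j-1}$ makes explicit a step the paper leaves implicit; otherwise the arguments coincide, including the parenthetical remark that the same reasoning yields $v_{2i}\longrightarrow v_{2i-1}$ and $v_{2i+1}\dashrightarrow v_{2i}$ for all $i$.
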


\begin{proof}
We consider the sequence $\{b'_i\}$ in $V(\Gamma')$ as in Definition~\ref{def:type}.
Here the cycle $v_1,\ldots,v_{2k}$ is $\alpha$-type.
Let $n_0$ be the positive number such that $b'_i \neq \square$ for $1\le i \le n_0$ and 
$b'_i = \square$ for $i \ge n_0+1$.
Let $A'_0:=\{b'_i\,|\,i=1,\ldots,n_0\}$.
Here $b'_i\neq b'_j$ if $i\neq j$.

Then there exist the following edges in $\Gamma'$.

(1) Since $v_1 \longrightarrow v_2$ is in $\widetilde{\Gamma}$, 
we have $[v'_1,\phi_{v_1}(v_2)]=[b'_1,b'_2] \in E(\Gamma')$.

(2) Since $v_2 \dashrightarrow v_3$ is in $\widetilde{\Gamma}$, 
we have $[v_2,\phi^{-1}_{v_2}(v'_3)]\in E(\Gamma)$ and 
$[\phi_{v_1}(v_2),\phi_{v_1}\circ \phi^{-1}_{v_2}(v'_3)]=[b'_2,b'_3]\in E(\Gamma')$.
(Here $\phi^{-1}_{v_2}(v'_3) \neq v_1$. Indeed, $[v_2,\phi^{-1}_{v_2}(v'_3)]\in E(\Gamma)$ and 
$[v_1,v_2] \not\in E(\Gamma)$, since $v_1 \longrightarrow v_2$ is in $\widetilde{\Gamma}$.)

(3) Since $v_3 \longrightarrow v_4$ is in $\widetilde{\Gamma}$, 
we have $[v'_3,\phi_{v_3}(v_4)]\in E(\Gamma')$ and 
$[\phi_{v_1}\circ \phi^{-1}_{v_2}(v'_3), 
\phi_{v_1}\circ \phi^{-1}_{v_2}\circ \phi_{v_3}(v_4)]=[b'_3,b'_4]\in E(\Gamma')$.

(4) Since $v_4 \dashrightarrow v_5$ is in $\widetilde{\Gamma}$, 
we have $[v_4,\phi^{-1}_{v_4}(v'_5)]\in E(\Gamma)$ and 
$[\phi_{v_1}\circ \phi^{-1}_{v_2}\circ \phi_{v_3}(v_4),
\phi_{v_1}\circ \phi^{-1}_{v_2}\circ \phi_{v_3}\circ \phi^{-1}_{v_4}(v'_5)]=[b'_4,b'_5]\in E(\Gamma')$.

$(2k-1)$ Since $v_{2k-1} \longrightarrow v_{2k}$ is in $\widetilde{\Gamma}$, 
we have $[v'_{2k-1},\phi_{v_{2k-1}}(v_{2k})]\in E(\Gamma')$ and 
for 
\[ \overline{\phi}_{2k-2}:=\phi_{v_1}\circ \phi^{-1}_{v_2}\circ \phi_{v_3}
\circ \phi^{-1}_{v_4}\circ \cdots \circ\phi_{v_{2k-3}} \circ \phi^{-1}_{v_{2k-2}}, \]
$[\overline{\phi}_{2k-2}(v'_{2k-1}), \overline{\phi}_{2k-2}\circ \phi_{v_{2k-1}}(v_{2k})]=[b'_{2k-1},b'_{2k}]\in E(\Gamma')$.

$(2k)$ Since $v_{2k} \dashrightarrow v_1$ is in $\widetilde{\Gamma}$, 
we have $[v_{2k},\phi^{-1}_{v_{2k}}(v'_1)]\in E(\Gamma)$ and 
for 
\[ \overline{\phi}_{2k-1}:=\phi_{v_1}\circ \phi^{-1}_{v_2}\circ \phi_{v_3}
\circ \phi^{-1}_{v_4}\circ \cdots \circ\phi_{v_{2k-3}} \circ \phi^{-1}_{v_{2k-2}}\circ \phi_{v_{2k-1}}, \]
$[ \overline{\phi}_{2k-1}(v_{2k}),\overline{\phi}_{2k-1}\circ \phi^{-1}_{v_{2k}}(v'_1)] =[b'_{2k},b'_{2k+1}]\in E(\Gamma')$.

Thus $[b'_i,b'_{i+1}]\in E(\Gamma')$ for any $i=1,\ldots,n_0-1$.

Let $A_{2i-1}:=\phi^{-1}_{v_{2i-1}}(A'_{2i-2}-\{ v'_{2i-1} \})\cup\{ v_{2i-1} \}$ and 
let $A'_{2i}:=\phi_{v_{2i}}(A_{2i-1}-\{ v_{2i} \})\cup\{ v'_{2i} \}$ for each $i=1,\ldots,k$.
Here $A'_{2k}=A'_0$.
We can also denote 
$A_{2i-1}=\{ b^{(2i-1)}_{1},\ldots, b^{(2i-1)}_{n_0} \}$ and 
$A'_{2i}=\{ {b'}^{(2i)}_{1},\ldots, {b'}^{(2i)}_{n_0} \}$ for $i=1,\ldots,k$ 
as in Definition~\ref{def:type} (see Figure~\ref{figureZ1}).

By the definition of $\alpha$-type, $b_{n_0}^{(1)} = v_1$ on $A_1$ 
and ${b'}_{n_0}^{(2)} = v'_2$ on $A'_2$.
Hence $[\phi_{v_2}(v_1),v'_2]\in E(\Gamma')$ on $A'_2$.
Also $[v_1,v_2]\not\in E(\Gamma)$.
Thus there exists a normal-arrow $v_2 \longrightarrow v_1$ in $\widetilde{\Gamma}$.
(Here by the same argument, we can obtain that 
there exist arrows $v_{2i} \longrightarrow v_{2i-1}$ and 
$v_{2i+1} \dashrightarrow v_{2i}$ in $\widetilde{\Gamma}$ 
for all $i=1,\ldots,k$ where $v_{2k+1}=v_1$.)
\end{proof}

From Proposition~\ref{Prop3}, we obtain the following.

\begin{Corollary}\label{Thm4}
Let $\Gamma$ and $\Gamma'$ be finite simple graphs with at least three vertices 
satisfying the property~$(*)$.
Then for the associated directed graph $\widetilde{\Gamma}$, 
\begin{enumerate}
\item[\textnormal{(I)}] there exists a vertex $v\in V(\widetilde{\Gamma})$ 
with no normal-arrow arising from $v$ in $\widetilde{\Gamma}$ 
(then $\Gamma \cong \Gamma'$ by Theorem~\ref{Thm1}), 
\item[\textnormal{(II)}] for some $v_1,v_2\in V(\widetilde{\Gamma})$,
there exist normal-arrows $v_1 \longrightarrow v_2$ and $v_2 \longrightarrow v_1$ 
in $\widetilde{\Gamma}$, or
\item[\textnormal{(III)}] 
some and all cycles with alternate normal-arrows and dashed-arrows in $\widetilde{\Gamma}$ 
are $\beta$-type.
\end{enumerate}
\end{Corollary}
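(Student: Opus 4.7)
The plan is a straightforward trichotomy: either some vertex has no outgoing normal-arrow (case (I)), or every vertex does; in the latter situation I would extract an alternating cycle and use Proposition~\ref{Prop3} to produce case (II) whenever some cycle is $\alpha$-type.

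First, suppose (I) fails, so every $v \in V(\widetilde{\Gamma})$ has at least one normal-arrow arising from it; by Proposition~\ref{Proposition1}, every such $v$ also has at least one dashed-arrow arising from it. Starting from an arbitrary vertex $v_1$, I would build a walk by alternately picking an outgoing normal-arrow $v_{2i-1} \longrightarrow v_{2i}$ and an outgoing dashed-arrow $v_{2i} \dashrightarrow v_{2i+1}$; since $V(\widetilde{\Gamma})$ is finite a vertex must eventually repeat in an odd position, and truncating gives a cycle $v_1,\ldots,v_{2k}$ of the form described just before Figure~\ref{figure-cycle}. Consequently, whenever (I) fails, at least one alternating cycle actually exists in $\widetilde{\Gamma}$, so the clause ``some'' in (III) is automatically nonvacuous.

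By Definition~\ref{def:type}, every such cycle is either $\alpha$-type or $\beta$-type. If among all alternating cycles some cycle $v_1,\ldots,v_{2k}$ is $\alpha$-type, I would feed it directly into Proposition~\ref{Prop3} to obtain normal-arrows $v_1 \longrightarrow v_2$ and $v_2 \longrightarrow v_1$, which is exactly (II). If instead no alternating cycle is $\alpha$-type, then every alternating cycle is $\beta$-type, which (together with the existence observation above) is exactly (III).

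The argument is essentially a finiteness/pigeonhole step feeding into Proposition~\ref{Prop3}. I do not expect a genuine obstacle: the only point requiring a little care is verifying that, when (I) fails, the greedy alternating walk really can be continued at every step (which is where both Proposition~\ref{Proposition1} and the failure of (I) are used simultaneously) and that truncation at a repeated vertex preserves the alternation pattern; both are immediate once one notes that the walk must return to a vertex in an even-indexed position to close up consistently.
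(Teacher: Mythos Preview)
Your argument is correct and matches the paper's approach: the paper simply writes ``From Proposition~\ref{Prop3}, we obtain the following'' and relies on the cycle-extraction discussion already given in Section~\ref{sec1}, which is exactly the finiteness/pigeonhole step you spell out. One minor slip to fix: in your final sentence you say the walk must close up at an \emph{even}-indexed position, contradicting your earlier (correct) claim that the repetition occurs at an odd position---the cycle format $v_1,\ldots,v_{2k}$ with $v_{2k+1}=v_1$ indeed requires the repetition among the odd-indexed vertices $v_1,v_3,v_5,\ldots$, which is guaranteed by finiteness of that subsequence.
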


\section{In the case that 
$v_1 \longrightarrow v_2$ and $v_2 \longrightarrow v_1$ are in $\widetilde{\Gamma}$ whose cycle is $\alpha$-type}\label{sec5}

We suppose that there exist normal-arrows $v_1 \longrightarrow v_2$ and 
$v_2 \longrightarrow v_1$ in $\widetilde{\Gamma}$ 
for some vertices $v_1,v_2 \in V(\widetilde{\Gamma})$ and 
suppose that the cycle $v_1,v_2$ is $\alpha$-type.

We consider the isomorphism 
\[ \Psi:= \phi_{v_2}^{-1} \circ \phi_{v_1}: \Gamma-v_1-\phi_{v_1}^{-1}(v'_2) 
\xrightarrow[\cong ]{\; \phi_{v_1}| \;}
\Gamma'-v'_1-v'_2
\xrightarrow[\cong ]{\; \phi_{v_2}^{-1}| \;}
\Gamma - v_2 - \phi_{v_2}^{-1}(v'_1).
\]
Let $A$, $X$ and $Y$ be the full-subgraphs of $\Gamma$ induced by 
\begin{align*}
&V(A)=\{a\in V(\Gamma) \;|\; \Psi^i(a)\in \{v_1,\phi_{v_1}^{-1}(v'_2) \} 
\ \text{for some } i\ge 0 \}, \\
&V(X) \! = \! \left\{x\in V(\Gamma) \! - \! \{ v_1,\phi_{v_1}^{-1}(v'_2)\} 
\! \biggm| \! \! \!
\begin{array}{l}
\Psi(x)\neq x \ \text{and} \\
\Psi^i(x) \! \not\in \! \{v_1,\phi_{v_1}^{-1}(v'_2)\} \; \text{for any}\; i\ge 0
\end{array}
\! \! \right\} \ \text{and} \\
&V(Y)=\{y\in V(\Gamma)-\{ v_1,\phi_{v_1}^{-1}(v'_2)\} \;|\; \Psi(y)= y \}.
\end{align*}
Here
\[ V(\Gamma) = V(X)\cup V(Y) \cup V(A) \]
that is a disjoint union.

Let $X\cup Y$ be the full-subgraph of $\Gamma$ with $V(X\cup Y)=V(X)\cup V(Y)$.
Then $\Psi:X\cup Y \to X\cup Y$ is a graph-automorphism.
Here $X$ is the moving-part and $Y$ is the fixed-part by $\Psi$.
If $X$ is non-empty then 
the graph-automorphism group $G$ of $X\cup Y$ generated by $\Psi$ is 
\[ G=\{\Psi^i \,|\, i\in {\mathbb{Z}} \} \cong {\mathbb{Z}}_m  \]
for some number $m \ge 2$, since $\Gamma$ is a finite graph.

Also we consider the isomorphism 
\[ \Psi':= \phi_{v_2} \circ \phi_{v_1}^{-1}: 
\Gamma'-v'_1-\phi_{v_1}(v_2) 
\xrightarrow[\cong ]{\; \phi_{v_1}^{-1}| \;}
\Gamma-v_1-v_2
\xrightarrow[\cong ]{\; \phi_{v_2}| \;}
\Gamma'-v'_2-\phi_{v_2}(v_1).
\]
Let $A'$, $X'$ and $Y'$ be the full-subgraphs of $\Gamma'$ induced by 
\begin{align*}
&V(A')=\{a'\in V(\Gamma') \;|\; (\Psi')^i(a')\in \{ v'_1,\phi_{v_1}(v_2)\} 
\ \text{for some } i\ge 0 \}, \\
&V(X') \! = \! \left\{ x'\in V(\Gamma') \! - \! \{ v'_1,\phi_{v_1}(v_2)\} 
\! \biggm| \! \! \! 
\begin{array}{l}
\Psi'(x')\neq x' \ \text{and} \\
(\Psi')^i(x') \! \not\in \! \{ v'_1,\phi_{v_1}(v_2)\} \; \text{for any}\; i\ge 0
\end{array}
\! \! \right\} \ \text{and} \\
&V(Y')=\{y'\in V(\Gamma')-\{v'_1,\phi_{v_1}(v_2)\} \;|\; \Psi'(y')= y' \}.
\end{align*}
Here
\[ V(\Gamma') = V(X')\cup V(Y') \cup V(A') \]
that is a disjoint union.

\medskip

We consider a sequence $\{a_i\}$ in $V(\Gamma)$ as 
\begin{align*}
&a_1:=v_1, \\
&a_2:=\phi^{-1}_{v_1}(v'_2), \\
&a_3:=\phi^{-1}_{v_1}\circ \phi_{v_2}(v_1), \\
&a_4:=\phi^{-1}_{v_1}\circ \phi_{v_2}\circ\phi^{-1}_{v_1}(v'_2), \\
&a_5:=\phi^{-1}_{v_1}\circ \phi_{v_2}\circ\phi^{-1}_{v_1}\circ \phi_{v_2}(v_1), \\
&\cdots.
\end{align*}
Here the cycle $v_1,v_2$ is $\alpha$-type by assumption.
Let $n$ be the positive number 
such that $a_i \neq \square$ for all $1 \le i \le n$ and 
$a_i = \square$ for all $i \ge n+1$.
Let $A_0:=\{a_1,a_2,\ldots, a_n \}$.
Here $a_i\neq a_j$ if $i\neq j$.

We also define a sequence $\{a'_i\,|\,i=1,\ldots,n\}$ in $V(\Gamma')$ as 
$a'_i:=\phi_{v_1}(a_{n-i+2})$ for any $i=2,\ldots,n$ and 
$a'_1:=v'_1$.
Let $A'_0:= \{a'_1,a'_2,\ldots, a'_n\}$.
Here $A'_0=\phi_{v_1}(A_0-\{ v_1 \})\cup \{ v'_1 \}$.

Then 
by the argument in Section~\ref{sec4} (where $k=1$ and $n_0=n$), 
we obtain that 
\begin{enumerate}
\item[(1)] $V(A)=A_0$, 
\item[(2)] $V(A')=A'_0$, 
\item[(3)] $\phi_{v_1}(A-a_1)=A'-a'_1$ where $a_1=v_1$ and $a'_1=v'_1$,
\item[(4)] $\phi_{v_1}(a_i)=a'_{n-i+2}$ for any $i=2,\ldots,n$, 
\item[(5)] $\phi_{v_2}(A-a_n)=A'-a'_n$ where $a_n=v_2$ and $a'_n=v'_2$, and
\item[(6)] $\phi_{v_2}(a_i)=a'_{n-i}$ for any $i=1,\ldots,n-1$ 
\end{enumerate}
(see Figure~\ref{figure-D2}).
Here each vertices $a_i, a_{i+1}$ do not have to span an edge in $\Gamma$.

Then $A'_0=\phi_{v_1}(A_0-\{ v_1 \})\cup \{ v'_1 \}$ and 
$A'_0=\phi_{v_2}(A_0-\{ v_2 \})\cup \{ v'_2 \}$ both hold.
Here
\begin{align*}
&a_1=v_1,\ \ a_2=\phi_{v_1}^{-1}(v'_2),\ \ a_{n-1}=\phi_{v_2}^{-1}(v'_1),\ \ a_n=v_2, \\
&a'_1=v'_1,\ \ a'_2=\phi_{v_1}(v_2),\ \ a'_{n-1}=\phi_{v_2}(v_1)\ \ \text{and}\ \ a'_n=v'_2.
\end{align*}

\begin{figure}
\centering
{
\includegraphics[keepaspectratio, scale=0.90, bb=0 0 329 352]{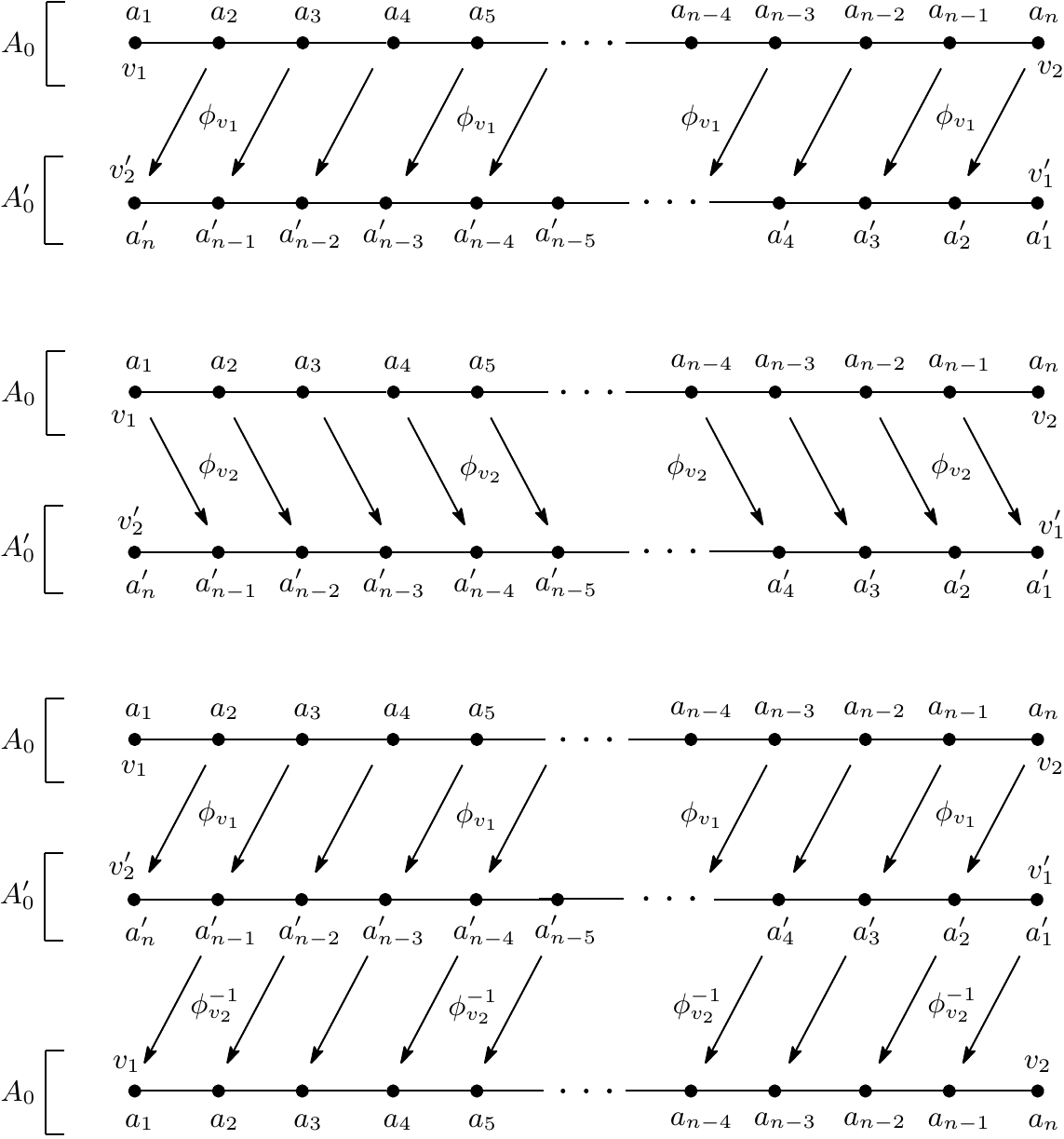}
}
\caption{$A_0$ and $A'_0$}\label{figure-D2}
\end{figure}

Also
\begin{enumerate}
\item[(7)] $\Psi= \phi_{v_2}^{-1} \circ \phi_{v_1}: \Gamma-\{a_1,a_2\} \to \Gamma-\{a_{n-1},a_n\}$ 
is an isomorphism such that $\Psi(a_i)=a_{i-2}$ for any $i=3,\ldots,n$,
\item[(8)] $\Psi'= \phi_{v_2} \circ \phi_{v_1}^{-1}: \Gamma'-\{a'_1,a'_2\} \to \Gamma'-\{a'_{n-1},a'_n\}$ 
is an isomorphism such that $\Psi'(a'_i)=a'_{i-2}$ for any $i=3,\ldots,n$,
\item[(9)] $\Psi|_{A-\{a_1,a_2\}}: A-\{a_1,a_2\} \to A-\{a_{n-1},a_n\}$ 
is an isomorphism such that $\Psi(a_i)=a_{i-2}$ for any $i=3,\ldots,n$, and
\item[(10)] $\Psi'|_{A'-\{a'_1,a'_2\}}: A'-\{a'_1,a'_2\} \to A'-\{a'_{n-1},a'_n\}$ 
is an isomorphism such that $\Psi'(a'_i)=a'_{i-2}$ for any $i=3,\ldots,n$.
\end{enumerate}

We obtain the following.

\begin{Proposition}
Then $\phi_{v_i}(X\cup Y)=X'\cup Y'$, 
$\phi_{v_i}(X)=X'$ and $\phi_{v_i}(Y)=Y'$ 
for each $i=1,2$.
Hence 
\[ X\cup Y \cong X'\cup Y',\ X\cong X' \ \text{and}\ Y \cong Y'. \]
Their isomorphisms are obtained by the restrictions of $\phi_{v_1}$ and $\phi_{v_2}$ both.
\end{Proposition}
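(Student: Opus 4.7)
The plan is to reduce everything to vertex-set equalities: since $X, Y, X', Y'$ are full subgraphs and $\phi_{v_1}, \phi_{v_2}$ are graph isomorphisms on their respective domains, once $\phi_{v_i}(V(X)) = V(X')$ and $\phi_{v_i}(V(Y)) = V(Y')$ are established, the restrictions furnish the required graph isomorphisms. The proof splits into a coarse step and a refinement.

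For the coarse step, I would use the disjoint decompositions $V(\Gamma) = V(X)\cup V(Y)\cup V(A)$ and $V(\Gamma') = V(X')\cup V(Y')\cup V(A')$. Item (3) gives that $\phi_{v_1}$ restricts to a bijection $V(A) - \{a_1\} \to V(A') - \{a'_1\}$; since $\phi_{v_1}$ itself is a bijection $V(\Gamma) - \{v_1\} \to V(\Gamma') - \{v'_1\}$, passing to complements yields $\phi_{v_1}(V(X)\cup V(Y)) = V(X')\cup V(Y')$. Item (5) yields the parallel statement for $\phi_{v_2}$.

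The refinement hinges on the conjugation identity
\[ \phi_{v_1}\circ \Psi = (\Psi')^{-1}\circ \phi_{v_1}, \]
which follows immediately from $\Psi = \phi_{v_2}^{-1}\circ \phi_{v_1}$ and $(\Psi')^{-1} = \phi_{v_1}\circ \phi_{v_2}^{-1}$; the same computation gives $\phi_{v_2}\circ \Psi = (\Psi')^{-1}\circ \phi_{v_2}$. Both identities make $\Psi(w) = w$ equivalent to $\Psi'(\phi_{v_i}(w)) = \phi_{v_i}(w)$ on the appropriate domain. Consequently, for $y \in V(Y)$ the point $\phi_{v_1}(y)$ is $\Psi'$-fixed and lies in $V(X')\cup V(Y')\subset V(\Gamma') - \{a'_1,a'_2\}$ by the coarse step, so by definition $\phi_{v_1}(y)\in V(Y')$. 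For $x\in V(X)$ the non-fixing $\Psi(x)\neq x$ transports to $\Psi'(\phi_{v_1}(x))\neq \phi_{v_1}(x)$, so $\phi_{v_1}(x)$ cannot lie in the $\Psi'$-fixed set $V(Y')$; combined with $\phi_{v_1}(x)\in V(X')\cup V(Y')$ and the disjointness, $\phi_{v_1}(x)\in V(X')$.

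Combining $\phi_{v_1}(V(X)\cup V(Y)) = V(X')\cup V(Y')$ with the inclusions $\phi_{v_1}(V(X))\subset V(X')$, $\phi_{v_1}(V(Y))\subset V(Y')$ and the disjointness $V(X')\cap V(Y') = \emptyset$ forces equality in each inclusion; the argument for $\phi_{v_2}$ is verbatim, using item (5) and the second conjugation identity. The main subtlety, arising because $\Psi$ and $\Psi'$ are only partially defined, is to confirm that the coarse step places $\phi_{v_i}(V(X)\cup V(Y))$ inside $V(\Gamma') - \{a'_1,a'_2\}$, so that the partition of this set into $\Psi'$-fixed and non-fixed loci can be applied cleanly; once the coarse step is in hand, no orbit-condition on $\Psi'$-iterates needs to be verified directly, because the disjoint decomposition does the bookkeeping.
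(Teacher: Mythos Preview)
Your proposal is correct and follows essentially the same route as the paper's proof. Both arguments first use items (3) and (5) to get $\phi_{v_i}(V(X)\cup V(Y)) = V(X')\cup V(Y')$, and then exploit the relation $\phi_{v_1}\circ\Psi = (\Psi')^{-1}\circ\phi_{v_1}$ (which the paper writes out pointwise rather than as a conjugation identity) to transport the fixed/non-fixed dichotomy; the paper checks only $\phi_{v_1}(V(X))\subset V(X')$ and then passes to $Y$ by complement, whereas you verify both inclusions and invoke disjointness, which is a slightly cleaner way to close the argument.
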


\begin{proof}
Since $\phi_{v_1}(A-v_1)=A'-v'_1$ and $\phi_{v_2}(A-v_2)=A'-v'_2$ by (3) and (5) above, 
we have that $\phi_{v_i}(X\cup Y)=X'\cup Y'$ for each $i=1,2$.

We show that $\phi_{v_1}(X)=X'$.
Let $x_0 \in V(X)$.
We consider $x'_1:=\phi_{v_1}(x_0)$, 
$x_2:=\phi_{v_2}^{-1}(x'_1)=\Psi(x_0)$ 
and $x'_3:=\phi_{v_1}(x_2)=(\Psi')^{-1}(x'_1)$.
Then $x_0 \neq x_2$, since $x_0 \in V(X)$.
Hence $x'_1=\phi_{v_1}(x_0) \neq \phi_{v_1}(x_2)=x'_3$.
Thus $\Psi'(x'_1)\neq x'_1$ and $x'_1 \in V(X')$.
This follows that $\phi_{v_1}(x_0) \in V(X')$ for all $x_0 \in V(X)$.
Thus $X\cong X'$ by $\phi_{v_1}$.

Also since $V(Y)=V(X\cup Y)-V(X)$ and $V(Y')=V(X'\cup Y')-V(X')$, 
we obtain that $\phi_{v_1}(y) \in V(Y')$ for all $y \in V(Y)$.
Thus $Y\cong Y'$ by $\phi_{v_1}$.

By the same argument, we can also show that 
$X\cong X'$ and $Y\cong Y'$ by $\phi_{v_2}$.
\end{proof}

\medskip

By (1)--(10) above, we can denote $A=A(n;B,C)$ and $A'=A(n;C,B)$ 
for some subsets $B$ and $C$ of the set $\{1,\ldots,n-1 \}$.
Here $V(A)=\{a_1,a_n,\ldots,a_n \}$ is the numbering of vertices 
in the definition of $A=A(n;B,C)$.
Also if $V(A')=\{b_1,b_2,\ldots,b_n \}$ is 
the numbering of vertices in the definition of $A'=A(n;C,B)$, then 
$a'_i=b_{n-i+1}$ for all $i=1,\ldots,n$ and $V(A')=\{a'_1,a'_2,\ldots,a'_n \}$.

\medskip

Then 
\begin{enumerate}
\item[(a)] since $v_1 \longrightarrow v_2$ is in $\widetilde{\Gamma}$,
we have that $[a'_1,a'_2]=[v'_1,\phi_{v_1}(v_2)]\in E(\Gamma')$ holds, and
\item[(b)] since $v_2 \longrightarrow v_1$ is in $\widetilde{\Gamma}$,
we have that $[a'_n,a'_{n-1}]=[v'_2,\phi_{v_2}(v_1)]\in E(\Gamma')$ holds.
\end{enumerate}
Here $1\in C$, because $A'=A(n;C,B)$ and $[a'_n,a'_{n-1}]=[b_1,b_2] \in E(A')$.

Also 
\begin{enumerate}
\item[(c)] $v_1 \dashrightarrow v_2$ is in $\widetilde{\Gamma}$ if and only if 
$[a_1,a_2]=[v_1,\phi_{v_1}^{-1}(v'_2)]\in E(\Gamma)$, and
\item[(d)] $v_2 \dashrightarrow v_1$ is in $\widetilde{\Gamma}$ if and only if 
$[a_n,a_{n-1}]=[v_2,\phi_{v_2}^{-1}(v'_1)]\in E(\Gamma)$.
\end{enumerate}

Thus we obtain the following.

\begin{Lemma}\label{lemma5.2}
We can denote $A=A(n;B,C)$ and $A'=A(n;C,B)$ 
for some subsets $B$ and $C$ of the set $\{1,\ldots,n-1 \}$.
Here $1\in C$.
\end{Lemma}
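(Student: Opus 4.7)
The plan is to define $B$ and $C$ directly from the edges of $A$ incident to $a_1$ and $a_2$, to use the shift isomorphism $\Psi$ recorded in item~(9) to spread this local information to all odd- and even-indexed pairs, to transport the resulting structure over to $A'$ via $\phi_{v_1}$ and $\phi_{v_2}$ using items~(4) and~(6), and to read off $1\in C$ from property~(b).

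Concretely, I would set
\[
B := \{\,b\in\{1,\ldots,n-1\} : [a_1,a_{1+b}]\in E(A)\,\}, \qquad C := \{\,c\in\{1,\ldots,n-1\} : [a_2,a_{2+c}]\in E(A)\,\}.
\]
The defining condition of $A(n;B,C)$ asks, for each fixed $b$, that the edges $[a_{2t-1},a_{2t-1+b}]$ for $1\le 2t-1<2t-1+b\le n$ be simultaneously present (if $b\in B$) or absent (if $b\notin B$). Since $\Psi\colon A-\{a_1,a_2\}\to A-\{a_{n-1},a_n\}$ with $\Psi(a_i)=a_{i-2}$ is a graph isomorphism, applying $\Psi$ to such an edge with $t\ge 2$ yields $[a_{2(t-1)-1},a_{2(t-1)-1+b}]$; a finite descent on $t$ reduces every such edge to $[a_1,a_{1+b}]$, so the membership of this edge in $E(A)$ is uniform in $t$ and controlled by $b\in B$. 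The same argument for even indices gives the $C$-pattern, yielding $A=A(n;B,C)$.

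For $A'=A(n;C,B)$, I would work throughout in the $b$-labeling $a'_i=b_{n-i+1}$; items~(4) and~(6) then translate into $\phi_{v_1}(a_i)=b_{i-1}$ for $i\ge 2$ and $\phi_{v_2}(a_i)=b_{i+1}$ for $i\le n-1$. For each $c\in C$, the edge $[a_{2t},a_{2t+c}]\in E(A)$ is pushed by $\phi_{v_1}$ to $[b_{2t-1},b_{2t-1+c}]\in E(A')$ whenever $2t-1+c\le n-1$, while the boundary case $2t-1+c=n$ is handled by pushing a suitable shifted source edge through $\phi_{v_2}$; together these verify the $C$-pattern in the first slot of $A(n;C,B)$. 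A symmetric treatment of $b\in B$, with the roles of $\phi_{v_1}$ and $\phi_{v_2}$ reversed, supplies the $B$-pattern in the second slot. Finally, item~(b) says $[a'_{n-1},a'_n]=[b_2,b_1]\in E(A')$, which is the $t=1$ instance of the $C$-pattern in $A(n;C,B)$, and hence $1\in C$.

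The main friction is the boundary in $A'$: edges incident to $b_1=a'_n$ or $b_n=a'_1$ are not reached by $\phi_{v_1}$ alone nor by $\phi_{v_2}$ alone, so I have to combine the two translations and check that the edge data they impose on $A'$ is mutually consistent. The shift invariance of $B$ and $C$ already established inside $A$ is exactly what makes the two transport routes agree on the overlapping edges, and this consistency is what lets the local definition of $B,C$ propagate into the global $A(n;C,B)$ structure of $A'$.
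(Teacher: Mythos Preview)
Your approach is exactly what the paper does: the paper's entire argument is the sentence ``By (1)--(10) above, we can denote $A=A(n;B,C)$ and $A'=A(n;C,B)$'' followed by reading off $1\in C$ from observation~(b), so your write-up is a more explicit rendering of the same idea---use the shift $\Psi$ to force the $B$/$C$ pattern on $A$, then push through $\phi_{v_1}$ and $\phi_{v_2}$ (your translations $\phi_{v_1}(a_i)=b_{i-1}$, $\phi_{v_2}(a_i)=b_{i+1}$ are correct) to obtain the $A(n;C,B)$ pattern on $A'$. One small point worth patching: the single edge $[b_1,b_n]=[v'_2,v'_1]$ is incident to both boundary vertices simultaneously, so it is reached by neither $\phi_{v_1}$ nor $\phi_{v_2}$ and your combination of the two does not determine it; however, membership of $n-1$ in $C$ controls only this edge of $A'$ and no edge of $A$ at all, so you may simply declare $n-1\in C$ or $n-1\notin C$ according to whether $[v'_1,v'_2]\in E(\Gamma')$, without disturbing anything else.
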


Then we show some lemmas.

\begin{Lemma}\label{lemmaA}
The following statements are equivalent$:$
\begin{enumerate}
\item[\textnormal{(1)}] The sequence $a_1,\ldots,a_n$ is a Hamilton-path of $A$.
\item[\textnormal{(2)}] The sequence $a'_1,\ldots,a'_n$ is a Hamilton-path of $A'$.
\item[\textnormal{(3)}] There exist dashed-arrows 
$v_1 \dashrightarrow v_2$ and $v_2 \dashrightarrow v_1$ in $\widetilde{\Gamma}$.
\end{enumerate}
\end{Lemma}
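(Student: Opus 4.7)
The plan is to reduce each of (1), (2), (3) to the single membership $1 \in B$, using that Lemma~\ref{lemma5.2} already guarantees $1 \in C$.

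First I would analyze Hamilton-paths in $A = A(n;B,C)$. By the defining clauses of $A(n;B,C)$, the consecutive edge $[a_i,a_{i+1}]$ lies in $E(A)$ iff $1 \in B$ when $i$ is odd (so the smaller index of the pair is odd) and iff $1 \in C$ when $i$ is even. Since $n \ge 3$, both parities occur among $i = 1,\dots,n-1$, so $a_1,\dots,a_n$ is a Hamilton-path of $A$ iff both $1 \in B$ and $1 \in C$ hold; combined with $1 \in C$ from Lemma~\ref{lemma5.2}, this reduces (1) to $1 \in B$. Exactly the same parity bookkeeping applied to $A' = A(n;C,B)$ in its defining enumeration $\{b_1,\dots,b_n\}$---noting that $a'_i = b_{n-i+1}$, so $a'_1,\dots,a'_n$ is merely the reverse of $b_1,\dots,b_n$, and that in $A'$ the first parameter is now $C$ (governing odd-start consecutive edges) and the second is $B$ (even-start)---yields that (2) is equivalent to ($1 \in B$ and $1 \in C$), hence again to $1 \in B$.

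For (3), I would apply the criteria (c) and (d) stated just above the lemma. By (c), $v_1 \dashrightarrow v_2$ iff $[a_1,a_2] \in E(\Gamma)$, and this odd-start consecutive edge lies in $E(A)$ iff $1 \in B$. By (d), $v_2 \dashrightarrow v_1$ iff $[a_{n-1},a_n] \in E(\Gamma)$; here the parity of the smaller index $n-1$ depends on $n$: if $n$ is even then $n-1$ is odd and the edge is present iff $1 \in B$, while if $n$ is odd then $n-1$ is even and the edge is present iff $1 \in C$, which is automatic by Lemma~\ref{lemma5.2}. In both cases (3) collapses to the single condition $1 \in B$, completing the chain (1) $\Leftrightarrow$ (2) $\Leftrightarrow$ (3). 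The whole argument is elementary parity bookkeeping; the only point demanding attention is the case split on the parity of $n$ in the analysis of $v_2 \dashrightarrow v_1$, and it is absorbed precisely because one branch yields the automatic condition $1 \in C$.
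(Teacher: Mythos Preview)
Your proof is correct and follows essentially the same approach as the paper's: both reduce the Hamilton-path conditions (1) and (2) to the membership $1\in B$ together with $1\in C$, invoke Lemma~\ref{lemma5.2} to make $1\in C$ automatic, and use the criteria (c) and (d) for the dashed arrows. The only cosmetic difference is that the paper organizes the argument as (1)$\Leftrightarrow$(2) followed by [(1)\,and\,(2)]$\Leftrightarrow$(3), and in the direction (3)$\Rightarrow$(1) uses only the arrow $v_1\dashrightarrow v_2$ (yielding $1\in B$ directly), thereby sidestepping the parity split on $n$ that you carry out for $[a_{n-1},a_n]$; your explicit case analysis is perfectly fine and arrives at the same conclusion.
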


\begin{proof}
(1)$\Longleftrightarrow$(2): 
For $A=A(n;B,C)$ and $A'=A(n;C,B)$,
the sequence $a_1,\ldots,a_n$ is a Hamilton-path of $A$ 
if and only if $[a_1,a_2]$ and $[a_2,a_3]$ are in $E(A)$; 
that is, $1\in B$ and $1 \in C$.
This is equivalent that 
the sequence $b_1,\ldots,b_n$ is a Hamilton-path of $A'=A(n;C,B)$ 
where $a'_i=b_{n-i+1}$ for $i=1,\ldots,n$.
Hence (1) and (2) are equivalent.

[(1) and (2)]$\Longleftrightarrow$(3): 
Now a dashed-arrow $v_1 \dashrightarrow v_2$ is in $\widetilde{\Gamma}$ 
if and only if $[a_1,a_2] \in E(A)$.
Also a dashed-arrow $v_2 \dashrightarrow v_1$ is in $\widetilde{\Gamma}$ 
if and only if $[a_n,a_{n-1}] \in E(A)$.

Thus, if (1) and (2) hold then we obtain (3) holds.

Suppose that (3) holds.
Then $[a_1,a_2] \in E(A)$ by the above.
Hence $1\in B$ in $A=A(n;B,C)$.
Also $1\in C$ by Lemma~\ref{lemma5.2}.
Thus (1) and (2) hold.
\end{proof}

\begin{Lemma}\label{LemmaC}
If $n$ is odd, then 
the sequence $a_1,\ldots,a_n$ is a Hamilton-path of $A$, 
the sequence $a'_1,\ldots,a'_n$ is a Hamilton-path of $A'$ and 
there exist dashed-arrows 
$v_1 \dashrightarrow v_2$ and $v_2 \dashrightarrow v_1$ in $\widetilde{\Gamma}$.
\end{Lemma}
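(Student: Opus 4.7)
The plan is to exploit Lemma~\ref{lemmaA} to reduce the proof to a single parity calculation. Since Lemma~\ref{lemma5.2} already provides $1\in C$, and since the proof of Lemma~\ref{lemmaA} shows that statements (1)--(3) of the target lemma all hold if and only if $1\in B$ and $1\in C$ (the Hamilton path condition needs precisely $[a_1,a_2],[a_2,a_3]\in E(A)$, i.e.\ $1\in B$ and $1\in C$), it suffices to establish $1\in B$ under the hypothesis that $n$ is odd.

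The first step is to extract an edge in $A'$ from the normal-arrow $v_1\longrightarrow v_2$, which, in contrast to the arrow $v_2\longrightarrow v_1$ used in Lemma~\ref{lemma5.2}, has not yet been exploited. By the definition of a normal-arrow we have $[v'_1,\phi_{v_1}(v_2)]\in E(\Gamma')$. Since $v'_1=a'_1$ and $\phi_{v_1}(v_2)=a'_2$ both lie in $V(A')$ and $A'$ is a full subgraph of $\Gamma'$, this yields $[a'_1,a'_2]\in E(A')$; this is the exact mirror image of how Lemma~\ref{lemma5.2} extracted $1\in C$ from the arrow $v_2\longrightarrow v_1$.

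The second step is to pass to the natural numbering $V(A')=\{b_1,\ldots,b_n\}$ of $A'=A(n;C,B)$ via the relation $a'_i=b_{n-i+1}$. Under this relabeling, $a'_1=b_n$ and $a'_2=b_{n-1}$, so the edge from the previous step becomes $[b_{n-1},b_n]\in E(A')$. Now the parity of $n$ selects which defining rule of $A(n;C,B)$ governs this edge. Since $n$ is odd, $n-1$ is even, so $[b_{n-1},b_n]=[b_{2t},b_{2t+1}]$ with $2t=n-1$, and by the definition of $A(n;C,B)$, in which the second argument $B$ controls the edges emanating forward from even-indexed vertices, this edge is present if and only if $1\in B$. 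Hence $1\in B$.

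Combining $1\in B$ with $1\in C$ and invoking Lemma~\ref{lemmaA} then delivers all three conclusions of Lemma~\ref{LemmaC} simultaneously: the Hamilton paths in $A$ and $A'$, together with both dashed-arrows $v_1\dashrightarrow v_2$ and $v_2\dashrightarrow v_1$ in $\widetilde{\Gamma}$. The only delicate point is bookkeeping: one must correctly translate between the two numberings $\{a'_i\}$ and $\{b_i\}$ of $A'$ and remember that in $A(n;C,B)$ the roles of $B$ and $C$ are swapped, so that even-indexed vertices are governed by $B$. Once this is in place the lemma reduces to the one-line parity observation above and no genuine obstacle remains.
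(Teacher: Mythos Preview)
Your proof is correct and follows essentially the same route as the paper: both extract the edge $[a'_1,a'_2]=[b_n,b_{n-1}]\in E(A')$ from the normal-arrow $v_1\longrightarrow v_2$, use the parity of $n$ to see that $n-1$ is even so this edge is governed by the second parameter $B$ in $A(n;C,B)$, conclude $1\in B$, and then combine with $1\in C$ from Lemma~\ref{lemma5.2} and invoke Lemma~\ref{lemmaA}. Your version is in fact slightly more streamlined, since the paper inserts the intermediate observation $[b_2,b_3]\in E(A')$ before stating $1\in B$, whereas you go directly from $[b_{n-1},b_n]$ to $1\in B$.
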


\begin{proof}
Suppose that $n$ is odd.
By the assumption, 
a normal-arrow $v_1 \longrightarrow v_2$ is in $\widetilde{\Gamma}$ and 
$[a'_1,a'_2]=[b_n,b_{n-1}] \in E(A')$.
Then since $n$ is odd, $[a'_{n-1},a'_{n-2}]=[b_2,b_3] \in E(A')$.
Hence $1\in B$, because $A'=A(n;C,B)$.
Thus, $[a_1,a_2]\in E(A)$.
Also $1\in C$ and $[a_2,a_3]\in E(A)$ by the assumption.
Hence the sequence $a_1,\ldots,a_n$ is a Hamilton-path of $A$ 
and there exist dashed-arrows 
$v_1 \dashrightarrow v_2$ and $v_2 \dashrightarrow v_1$ in $\widetilde{\Gamma}$ 
by Lemma~\ref{lemmaA}.
\end{proof}

\medskip

We investigate some properties of $A(n;B,C)$ and $A(n;C,B)$ in general case.

\begin{Definition}\label{def:A1b}
For $b,c \in \{1,\ldots,n-1\}$, 
let $A_1(b)$ and $A_2(c)$ be the simple graphs as $V(A_1(b))=V(A_2(c))=V(A)$, 
\begin{align*}
&E(A_1(b))=\{ \, [a_{2t-1},a_{2t-1+b}] \,|\, t\in {\mathbb{N}} \  \text{as}\  1 \le 2t-1 <2t-1+b \le n \} \ \text{and} \\
&E(A_2(c))=\{ \, [a_{2t},a_{2t+c}] \,|\, t\in {\mathbb{N}} \  \text{as}\ 2 \le 2t <2t+c \le n \}.
\end{align*}
Then $A_1(b)$ is a subgraph of $A=A(n;B,C)$ if and only if $b\in B$.
Also $A_2(c)$ is a subgraph of $A=A(n;B,C)$ if and only if $c\in C$.
Hence 
$E(A)=\bigcup_{b\in B} E(A_1(b))\cup \bigcup_{c\in C} E(A_2(c))$.
\end{Definition}

\begin{Definition}
Let $A:=A(n;B,C)$ where $n\in {\mathbb{N}}$ is a number at least $3$ 
and $B$ and $C$ are subsets of the set $\{1,\ldots,n-1 \}$.
Then we define the numbers $\beta(A)$, $\beta'(A)$, $\gamma(A)$ and $\gamma'(A)$ as follows;
\begin{align*}
&\beta(A):= \#\{ b\in B \,|\, n-b \ \text{is odd} \}, \\
&\beta'(A):= |B| - \beta(A), \\
&\gamma(A):=\#\{ c\in C \,|\, n-c \ \text{is odd} \} \ \text{and} \\
&\gamma'(A):= |C| - \gamma(A).
\end{align*}
Here we note that [$n-b$ is odd] if and only if 
[\ [$b$ is even if $n$ is odd] and [$b$ is odd if $n$ is even]\ ].
\end{Definition}

Then we have the following.

\begin{Lemma}\label{LemmaB}
Let $A=A(n;B,C)$, $A'=A(n;C,B)$, 
$V(A)=\{a_1,a_n,\ldots,a_n \}$ and $V(A')=\{a'_1,a'_2,\ldots,a'_n \}$ 
where $V(A)=\{a_1,a_n,\ldots,a_n \}$ and $V(A')=\{b_1,b_2,\ldots,b_n \}$ are 
the numbering of vertices as in the definition of $A=A(n;B,C)$ and $A'=A(n;C,B)$.
Let $a'_i:=b_{n-i+1}$ for $i=1,\ldots,n$. (Hence $V(A')=\{a'_1,a'_2,\ldots,a'_n \}$.)
Then the following two statements are equivalent$:$
\begin{enumerate}
\item[\textnormal{(i)}] $\beta(A)=\gamma(A)$.
\item[\textnormal{(ii)}] $|E(A)|=|E(A')|$, $\deg_A a_1 = \deg_{A'} a'_1$ and $\deg_A a_n = \deg_{A'} a'_n$.
\end{enumerate}
\end{Lemma}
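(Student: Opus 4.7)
The plan is to derive closed-form expressions for $|E(A)|-|E(A')|$ and for the four degrees $\deg_A a_1,\deg_A a_n,\deg_{A'}a'_1,\deg_{A'}a'_n$ in terms of $|B|$, $|C|$, $\beta(A)$, $\beta'(A)$, $\gamma(A)$, $\gamma'(A)$, and then to observe that each of the three equalities appearing in (ii) is individually equivalent to $\beta(A)=\gamma(A)$.

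For the edge count I would use the decomposition $E(A)=\bigcup_{b\in B}E(A_1(b))\cup\bigcup_{c\in C}E(A_2(c))$ from Definition~\ref{def:A1b}. Each $b\in B$ contributes $\lceil(n-b)/2\rceil$ edges to $A$ (one for each $t\in\mathbb{N}$ with $2t-1+b\le n$), and each $c\in C$ contributes $\lfloor(n-c)/2\rfloor$ edges. In $A'=A(n;C,B)$ the roles of $B$ and $C$ are exchanged, so $c\in C$ now contributes $\lceil(n-c)/2\rceil$ and $b\in B$ contributes $\lfloor(n-b)/2\rfloor$. Since $\lceil x/2\rceil-\lfloor x/2\rfloor$ equals $1$ when $x$ is odd and $0$ when $x$ is even, termwise subtraction collapses to
\[
|E(A)|-|E(A')|=\beta(A)-\gamma(A).
\]

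For the degrees I would read off directly from the definition of $A(n;B,C)$. The only edges at $a_1$ are $[a_1,a_{1+b}]$ for $b\in B$, so $\deg_A a_1=|B|=\beta(A)+\beta'(A)$, and the symmetric statement in $A'=A(n;C,B)$ (where $a'_n=b_1$) gives $\deg_{A'}a'_n=|C|=\gamma(A)+\gamma'(A)$. For $\deg_A a_n$ I would count separately the edges $[a_{n-b},a_n]$ coming from the $B$-rule --- these exist exactly when $n-b$ is odd, contributing $\beta(A)$ --- and the edges $[a_{n-c},a_n]$ coming from the $C$-rule --- these exist exactly when $n-c$ is even and $n-c\ge 2$, contributing $\gamma'(A)$; hence $\deg_A a_n=\beta(A)+\gamma'(A)$. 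A parallel computation in $A'$, using the relabelling $a'_1=b_n$, yields $\deg_{A'}a'_1=\gamma(A)+\beta'(A)$.

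Assembling the three differences,
\[
|E(A)|-|E(A')|=\beta(A)-\gamma(A),\qquad \deg_A a_1-\deg_{A'}a'_1=\beta(A)-\gamma(A),\qquad \deg_A a_n-\deg_{A'}a'_n=\beta(A)-\gamma(A),
\]
so each equality in (ii) is by itself equivalent to (i), and therefore so is their conjunction. The main nuisance I anticipate is the parity bookkeeping: one must verify, separately for $n$ even and $n$ odd, that ``$n-b$ odd'' correctly selects the $B$-rule edges incident to $a_n$ and that ``$n-c$ even with $n-c\ge 2$'' selects the $C$-rule ones, and one must carefully track that the relabelling $a'_i=b_{n-i+1}$ converts a direct computation of $\deg_{A'}b_n$ into the claimed value of $\deg_{A'}a'_1$. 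Once this dictionary is fixed, the identities above reduce to the index counts already carried out.
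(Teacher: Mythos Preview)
Your proposal is correct and follows essentially the same route as the paper: both decompose $E(A)$ into the subgraphs $A_1(b)$ and $A_2(c)$, observe that $|E(A_1(b))|-|E(A_2(b))|$ equals $1$ or $0$ according as $n-b$ is odd or even (you phrase this via $\lceil\cdot\rceil-\lfloor\cdot\rfloor$, the paper via a symmetry remark), and then compute the four degrees exactly as you do, arriving at the same three identities each equal to $\beta(A)-\gamma(A)$. The paper's extra paragraph on the automorphisms $\tau,\tau',\tau''$ of $A_j(b)$ is not needed for this lemma itself; it is there to feed into the later Proposition~\ref{PropositionA}.
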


\begin{proof}
Let $b \in \{1,\ldots,n-1\}$.
In the case that $n$ is even, 
$n-b$ is odd if and only if $b$ is odd.
Also in the case that $n$ is odd, 
$n-b$ is odd if and only if $b$ is even.

Here $n-b$ is odd 
if and only if for each $j=1,2$, 
$A_j(b)$ is symmetric and there exists the automorphism 
$\tau :A_j(b) \to A_j(b)$ as $\tau(a_i)=a_{n-i+1}$ for any $i=1,\ldots,n$.
Also $n-b$ is even if and only if for each $j=1,2$, 
$A_j(b)-a_1$ and $A_j(b)-a_n$ are symmetric and there exist the automorphisms 
$\tau' :A_j(b)-a_1 \to A_j(b)-a_1$ and 
$\tau'' :A_j(b)-a_n \to A_j(b)-a_n$ such that 
$\tau'(a_i)=a_{n-i+2}$ for any $i=2,\ldots,n$ and 
$\tau''(a_i)=a_{n-i}$ for any $i=1,\ldots,n-1$ 
(see Figure~\ref{figure-A}).

Here if $n-b$ is even then $|E(A_1(b))|=|E(A_2(b))|$ holds.
Also if $n-b$ is odd then $|E(A_1(b))|=|E(A_2(b))|+1$ holds.
Thus, for $A=A(n;B,C)$ and $A'=A(n;C,B)$, 
$|E(A)|=|E(A')|$ if and only if $\beta(A)=\beta(A')=\gamma(A)$.
Here $\beta(A')=\gamma(A)$ since $A=A(n;B,C)$ and $A'=A(n;C,B)$.

We also note that $\deg_A a_1=|B|$, $\deg_A a_n=\beta(A)+\gamma'(A)$, 
$\deg_{A'} b_1=\deg_{A'} a'_n=|C|$ and 
$\deg_{A'} b_n=\deg_{A'} a'_1=\beta(A')+\gamma'(A')=\gamma(A)+\beta'(A)$.
Then
\begin{align*}
\deg_A a_1=\deg_{A'} a'_1 \ &\Longleftrightarrow \ |B|=\gamma(A)+\beta'(A) \\
&\Longleftrightarrow \ |B|-\beta'(A)=\gamma(A) \\
&\Longleftrightarrow \ \beta(A)=\gamma(A).
\end{align*}
Also
\begin{align*}
\deg_A a_n=\deg_{A'} a'_n \ &\Longleftrightarrow \ \beta(A)+\gamma'(A)=|C| \\
&\Longleftrightarrow \ \beta(A)=|C|-\gamma'(A) \\
&\Longleftrightarrow \ \beta(A)=\gamma(A).
\end{align*}
\end{proof}

\medskip

From the above argument, we obtain the following theorem.

\begin{Theorem}\label{Thm6}
Let $\Gamma$ and $\Gamma'$ be finite simple graphs with at least three vertices 
satisfying the property~$(*)$.
If in the associated directed graph $\widetilde{\Gamma}$,
\begin{enumerate}
\item[\textnormal{(II)}] for some $v_1,v_2\in V(\widetilde{\Gamma})$, 
there exist normal-arrows $v_1 \longrightarrow v_2$ and $v_2 \longrightarrow v_1$ 
in $\widetilde{\Gamma}$
\end{enumerate}
and if the cycle $v_1,v_2$ is $\alpha$-type, 
then the graphs $\Gamma$ and $\Gamma'$ have the following structure $(\mathcal{F})${$:$}
\begin{enumerate}
\item[$(\mathcal{F})$] 
\begin{enumerate}
	\item[\textnormal{(1)}] There exists a positive number $n \ge 3$.
	\item[\textnormal{(2)}] There exists a full-subgraph $A$ of $\Gamma$ 
	with $V(A)=\{ a_1,a_2,\ldots,a_{n-1},a_n \}$ and $|V(A)|=n$.
	\item[\textnormal{(3)}] There exists a full-subgraph $A'$ of $\Gamma'$ 
	with $V(A')=\{ a'_1,a'_2,\ldots,a'_{n-1},a'_n \}$ and $|V(A')|=n$.
	\item[\textnormal{(4)}] $a_1=v_1$, $a_n=v_2$, $a'_1=v'_1$ and 
	$a'_n=v'_2$, where $v'_1:=f(v_1)$ and $v'_2:=f(v_2)$.
	\item[\textnormal{(5)}] $A=A(n;B,C)$ and $A'=A(n;C,B)$ 
	for some subsets $B$ and $C$ of the set $\{1,\ldots,n-1 \}$.
	\item[\textnormal{(6)}] $1\in C$ and $\beta(A)=\gamma(A)$ hold.
	\item[\textnormal{(7)}] If $n$ is odd 
	or if there exist dashed-arrows $v_1 \dashrightarrow v_2$ and $v_2 \dashrightarrow v_1$ 
	in $\widetilde{\Gamma}$, 
	then the sequence $a_1,\ldots,a_n$ is a Hamilton-path of $A$, 
	the sequence $a'_1,\ldots,a'_n$ is a Hamilton-path of $A'$ and $1\in B$.
	\item[\textnormal{(8)}] There exists 
	an isomorphism $\Psi:\Gamma-\{a_1,a_2 \} \to \Gamma-\{ a_{n-1},a_n \}$ 
	such that $\Psi(a_i)=a_{i-2}$ for any $i=3,\ldots,n$.
	\item[\textnormal{(9)}] There exists 
	an isomorphism $\Psi':\Gamma'-\{a'_1,a'_2 \} \to \Gamma'-\{ a'_{n-1},a'_n \}$ 
	such that $\Psi'(a'_i)=a'_{i-2}$ for any $i=3,\ldots,n$.
	\item[\textnormal{(10)}] The full-subgraph $X\cup Y$ of $\Gamma$ 
	with $V(X\cup Y)=V(\Gamma)-V(A)$ is $\Psi$-invariant.
	\item[\textnormal{(11)}] The full-subgraph $X'\cup Y'$ of $\Gamma'$ 
	with $V(X'\cup Y')=V(\Gamma')-V(A')$ is $\Psi'$-invariant.
	\item[\textnormal{(12)}] $X\cup Y$ is isomorphic to $X'\cup Y'$ 
	by $\phi_{v_1}|_{X\cup Y}$ and $\phi_{v_2}|_{X\cup Y}$ both.
	\item[\textnormal{(13)}] The full-subgraph $X$ of $\Gamma$ 
	with $V(X)=\{x\in V(X\cup Y)\,|\, \Psi(x)\neq x \}$ is 
	isomorphic to the full-subgraph $X'$ of $\Gamma'$ with 
	$V(X')=\{x'\in V(X'\cup Y')\,|\, \Psi'(x')\neq x' \}$ by 
	$\phi_{v_1}|_X$ and $\phi_{v_2}|_X$ both.
	(Here $X$ and $X'$ are the moving-parts by $\Psi$ and $\Psi'$ respectively.)
	\item[\textnormal{(14)}] The full-subgraph $Y$ of $\Gamma$ 
	with $V(Y)=\{y\in V(X\cup Y)\,|\, \Psi(y)= y \}$ is 
	isomorphic to the full-subgraph $Y'$ of $\Gamma'$ with 
	$V(Y')=\{y'\in V(X'\cup Y')\,|\, \Psi'(y')= y' \}$ by 
	$\phi_{v_1}|_Y$ and $\phi_{v_2}|_Y$ both.
	(Here $Y$ and $Y'$ are the fixed-parts by $\Psi$ and $\Psi'$ respectively.)
	\item[\textnormal{(15)}] 
	$\phi_{v_1}(a_i)=a'_{n-i+2}$ for any $i=2,\ldots,n$.
	\item[\textnormal{(16)}] 
	$\phi_{v_2}(a_i)=a'_{n-i}$ for any $i=1,\ldots,n-1$.
\end{enumerate}
\end{enumerate}
\end{Theorem}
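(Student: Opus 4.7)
The plan is to observe that the sixteen items constituting the structure $(\mathcal{F})$ have essentially all been derived in the paragraphs preceding the theorem in Section~\ref{sec5}, so the proof is largely an assembly. First, I take $n$ to be the positive integer supplied by the $\alpha$-type hypothesis for which the sequence $\{a_i\}$ terminates cleanly with $a_n = v_2$ and $a'_n = v'_2$, and I let $A, A'$ be the full subgraphs of $\Gamma, \Gamma'$ induced by $A_0 = \{a_1,\ldots,a_n\}$ and $A'_0 = \{a'_1,\ldots,a'_n\}$ respectively. This gives items $(1)$--$(4)$ by construction. Items $(15)$ and $(16)$ are items $(4)$ and $(6)$ of the list of properties derived in Section~\ref{sec5}; items $(8)$ and $(9)$ are items $(7)$ and $(8)$ of that same list; and item $(5)$ is Lemma~\ref{lemma5.2}. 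Items $(10)$ and $(11)$ follow from the disjoint decomposition $V(\Gamma) = V(X) \cup V(Y) \cup V(A)$ (respectively for $\Gamma'$) together with the observation that $\Psi, \Psi'$ are graph automorphisms of $X \cup Y, X' \cup Y'$. Items $(12)$--$(14)$ are immediate from the Proposition preceding the theorem, which explicitly decomposes the restrictions of $\phi_{v_1}$ and $\phi_{v_2}$ into the moving part $X \cong X'$ and the fixed part $Y \cong Y'$. Item $(7)$ is Lemma~\ref{LemmaC} (for the odd-$n$ case) combined with Lemma~\ref{lemmaA} (the equivalence of the three Hamilton-path statements).

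This leaves item $(6)$: the statement $1 \in C$ is Lemma~\ref{lemma5.2}, while the equality $\beta(A) = \gamma(A)$ requires genuine work. I would obtain it by invoking Lemma~\ref{LemmaB}, which reduces it to checking the three numerical identities $|E(A)| = |E(A')|$, $\deg_A a_1 = \deg_{A'} a'_1$ and $\deg_A a_n = \deg_{A'} a'_n$. Observe that the restricted isomorphisms $\phi_{v_1}|_{A - a_1}: A - a_1 \to A' - a'_1$ and $\phi_{v_2}|_{A - a_n}: A - a_n \to A' - a'_n$ (items (3) and (5) of the Section~\ref{sec5} setup) already give
\[
|E(A)| - \deg_A a_1 = |E(A')| - \deg_{A'} a'_1, \quad |E(A)| - \deg_A a_n = |E(A')| - \deg_{A'} a'_n,
\]
so all three conditions are equivalent to the single identity $|E(A)| = |E(A')|$. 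I would prove this by a global edge count: since $|E(\Gamma)| = |E(\Gamma')|$ (from the degree-sum identity $\sum_v \deg_\Gamma v = \sum_v \deg_{\Gamma'} f(v)$) and $|E(X \cup Y)| = |E(X' \cup Y')|$ (item $(12)$), the identity $|E(A)| = |E(A')|$ is equivalent to the equality of cross-edge counts between $V(A), V(X \cup Y)$ in $\Gamma$ and between $V(A'), V(X' \cup Y')$ in $\Gamma'$. Using the partial isomorphisms $\phi_{v_1}, \phi_{v_2}$ together with the shift $a_i \mapsto a_{i-2}$ induced by $\Psi$, one shows that the cross-degrees $d_{a_i}$ only depend on the parity of $i$, and matching them against the analogous $d'_{a'_j}$ forces the total cross-edge counts to agree.

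The main obstacle will be carrying out this cross-edge comparison uniformly. When $n$ is even, $\phi_{v_1}$ and $\phi_{v_2}$ preserve the parity of the indices ($n - i + 2$ and $n - i$ are then of the same parity as $i$), so $d_{a_i} = d'_{a'_i}$ holds pointwise by parity, and the cross-edge identity is immediate. When $n$ is odd, however, these maps swap odd and even indices, and one only gets $d_{a_{\mathrm{odd}}} = d'_{a'_{\mathrm{even}}}$ and $d_{a_{\mathrm{even}}} = d'_{a'_{\mathrm{odd}}}$; closing this gap requires the extra Hamilton-path structure provided by Lemma~\ref{LemmaC} and the degree-preservation $\deg_\Gamma v = \deg_{\Gamma'} f(v)$ applied not only at $v_1, v_2$ but across the whole degree sequence of $\Gamma$ versus $\Gamma'$. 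This is the only place where the assembly stops being routine.
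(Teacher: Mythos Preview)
Your proposal is correct and takes the same approach as the paper, whose entire proof of Theorem~\ref{Thm6} is the single sentence ``From the above argument, we obtain the following theorem.'' You correctly recognize the theorem as an assembly of the material developed earlier in Section~\ref{sec5}, and your item-by-item attributions (items (1)--(5), (7)--(16)) match the paper's derivations exactly.

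Where you actually go \emph{further} than the paper is on item~(6): the paper places Lemma~\ref{LemmaB} immediately before the theorem and leaves the implication $\beta(A)=\gamma(A)$ entirely implicit, whereas you spell out the reduction to $|E(A)|=|E(A')|$ via the restricted isomorphisms $\phi_{v_1}|_{A-a_1}$ and $\phi_{v_2}|_{A-a_n}$ and then attack that by a global cross-edge count. Your even-$n$ argument is clean and correct. Your candid flag on the odd-$n$ case is appropriate: the parity swap under $a_i\mapsto a'_{n-i+2}$ genuinely prevents the cross-degree identity $d_o=d_e$ from falling out of the two isomorphisms alone, and neither your sketch nor the paper supplies the missing step in detail. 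This is not a defect of your proposal relative to the paper --- it is a point the paper also leaves unargued.
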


\section{Examples and remarks on $A=A(n;B,C)$ and $A'=A(n;C,B)$}\label{sec6}

We give some examples and remarks on $A=A(n;B,C)$ and $A'=A(n;C,B)$.

Let $A=A(n;B,C)$ and $A'=A(n;C,B)$ 
for a number $n\in {\mathbb{N}}$ at least $3$ 
and subsets $B$ and $C$ of the set $\{1,\ldots,n-1 \}$.
Here $V(A)=\{a_1,a_n,\ldots,a_n \}$ is the numbering of vertices 
in the definition of $A=A(n;B,C)$.
Also if $V(A')=\{b_1,b_2,\ldots,b_n \}$ is 
the numbering of vertices in the definition of $A'=A(n;C,B)$, then 
$a'_i:=b_{n-i+1}$ for all $i=1,\ldots,n$ and $V(A')=\{a'_1,a'_2,\ldots,a'_n \}$.

We first show a proposition.

\begin{Proposition}\label{PropositionA}
Let 
\begin{align*}
&B_0:=\{ b\in B \,|\, n-b \ \text{is odd} \, \} \ \text{and}\\
&C_0:=\{ c\in C \,|\, n-c \ \text{is odd} \, \}.
\end{align*}
Then $B_0=C_0$ if and only if the following all statements hold.
\begin{enumerate}
\item[\textnormal{(1)}] $A-a_1=A-v_1$ is symmetric and there exists the automorphism 
$\tau_0 :A-a_1 \to A-a_1$ such that 
$\tau_0(a_i)=a_{n-i+2}$ for any $i=2,\ldots,n$.
\item[\textnormal{(2)}] $A-a_n=A-v_2$ is symmetric and there exists the automorphism 
$\tau_1 :A-a_n \to A-a_n$ such that 
$\tau_1(a_i)=a_{n-i}$ for any $i=1,\ldots,n-1$.
\item[\textnormal{(3)}] There exists the isomorphism $\overline{\varphi}:A\to A'$ 
such that $\overline{\varphi}(a_i)=a'_i$ for any $i=1,\ldots,n$.
Hence $A \cong A'$.
\item[\textnormal{(4)}] The isomorphisms $\varphi_0:=\phi_{v_1}\circ\tau_0:A-v_1 \to A'-v'_1$ 
and $\varphi_1:=\phi_{v_2}\circ\tau_1:A-v_2 \to A'-v'_2$ both 
extend to the isomorphism $\overline{\varphi}:A\to A'$ in $(3)$.
\end{enumerate}
\end{Proposition}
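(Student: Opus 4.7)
The plan is to verify each of (1)--(4) under the hypothesis $B_0 = C_0$ via a single parity-based edge-preservation calculation, and then to observe that the same calculation, read in reverse, yields the converse. Throughout I use the decomposition $E(A) = \bigcup_{b\in B} E(A_1(b)) \cup \bigcup_{c\in C} E(A_2(c))$ from Definition~\ref{def:A1b}, which lets me classify each edge by the parity of its smaller vertex index and by its offset.

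I first establish (3). Define $\overline{\varphi}(a_i) := a'_i$. Using $a'_i = b_{n-i+1}$ together with $A' = A(n;C,B)$, one rewrites each generating edge $[b_{2t-1}, b_{2t-1+c}]$ (for $c \in C$) and $[b_{2t}, b_{2t+b}]$ (for $b \in B$) of $A'$ in the form $[a'_p, a'_q]$ with $p < q$ and computes the parity of $p$. A direct check shows that the edges of $A'$ coming from $c \in C$ contribute $p$ with the parity of $n-c$, while those coming from $b \in B$ contribute $p$ with parity opposite to $n-b$. Regrouping by the parity of $p$: the edges of $A'$ with $p$ odd carry offsets in $C_0 \cup (B \setminus B_0)$, and those with $p$ even carry offsets in $(C \setminus C_0) \cup B_0$. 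Comparing with $E(A)$ (odd $p$ contributes $B$, even $p$ contributes $C$), the map $\overline{\varphi}$ is an isomorphism if and only if $B = C_0 \cup (B \setminus B_0)$ and $C = (C \setminus C_0) \cup B_0$, which is exactly $B_0 = C_0$.

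For (1), set $\tau_0(a_i) := a_{n-i+2}$, an involution of $\{a_2,\ldots,a_n\}$. An edge $[a_{2t-1}, a_{2t-1+b}]$ of $A-a_1$ with $b \in B$ is sent to $[a_{n-2t+3-b}, a_{n-2t+3}]$, which preserves the offset $b$ but whose new smaller index $n-2t+3-b$ has parity opposite to $n-b$. Hence if $n-b$ is even the image lies in $A_1(b)$ (present since $b \in B$), and if $n-b$ is odd the image lies in $A_2(b)$, forcing $b \in C$; this gives $B_0 \subseteq C_0$. The symmetric analysis on edges from $c \in C$ yields $C_0 \subseteq B_0$, and together these are precisely $B_0 = C_0$. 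Statement (2) is handled identically with $\tau_1(a_i) := a_{n-i}$. For (4), using $\phi_{v_1}(a_i) = a'_{n-i+2}$ and $\phi_{v_2}(a_i) = a'_{n-i}$ from statements (15)--(16) of Theorem~\ref{Thm6}, a direct computation gives $\varphi_0(a_i) = \phi_{v_1}(a_{n-i+2}) = a'_i$ for $i \in \{2,\ldots,n\}$ and $\varphi_1(a_i) = a'_i$ for $i \in \{1,\ldots,n-1\}$, so both maps agree with $\overline{\varphi}|_{A-v_i}$ and extend to the same $\overline{\varphi}:A \to A'$.

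Conversely, (4) explicitly invokes the $\overline{\varphi}$ of (3), and each of (1), (2), (3) individually forces $B_0 = C_0$ by reading the above edge-preservation equivalences in reverse. The main technical obstacle is the parity bookkeeping: because the reindexing $a'_i = b_{n-i+1}$ permutes the ``odd-indexed'' and ``even-indexed'' vertex classes according to the parity of $n$, one must track two layers of parity simultaneously --- the parity of $n - b$ (or $n - c$), which decides whether the offset lies in $B_0$ or $C_0$, and the parity of the smaller index of the resulting edge, which decides in which of $A_1(\cdot)$ or $A_2(\cdot)$ the image sits. Once this classification is set up cleanly, all four implications collapse to the same identity of subsets of $\{1,\ldots,n-1\}$.
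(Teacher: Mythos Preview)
Your proof is correct and rests on the same parity bookkeeping as the paper's, but the two arguments are organized differently. The paper first establishes (1) and (2) by decomposing $A$ into the pieces $A_1(b)$ and $A_2(c)$ and invoking the symmetry facts from the proof of Lemma~\ref{LemmaB} (namely: $A_j(b)$ itself is $\tau$-symmetric when $n-b$ is odd, while $A_j(b)-a_1$ and $A_j(b)-a_n$ are symmetric when $n-b$ is even); it then builds $\varphi_0,\varphi_1$ from $\tau_0,\tau_1$ and glues them into $\overline{\varphi}$ using the extra fact $[a_1,a_n]\notin E(A)$ coming from the normal arrow $v_1\longrightarrow v_2$. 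You instead prove (3) directly by a single parity computation on offsets, which avoids that gluing step and does not require $[a_1,a_n]\notin E(A)$; this is a small gain in self-containment. Conversely, the paper derives $B_0=C_0$ from (1) and (2) together, whereas you observe that (3) alone already forces $B_0=C_0$. One caution on your stronger claim that (1) or (2) \emph{individually} forces $B_0=C_0$: the offset $d=n-1$ produces no edge in $A-a_1$ or in $A-a_n$, so strictly speaking (1) and (2) each only control $B_0\setminus\{n-1\}$ versus $C_0\setminus\{n-1\}$; this is harmless here because the standing hypothesis $[v_1,v_2]\notin E(\Gamma)$ gives $n-1\notin B$, but it is worth making explicit.
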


\begin{proof}
Suppose that $B_0=C_0$.

Then for any $b\in B_0$, since $b\in C_0$, 
the graphs $A_1(b)$ and $A_2(b)$ (in Definition~\ref{def:A1b}) 
both are subgraphs of $A$.
Let $A_1(b) \cup A_2(b)$ be the simple graph as 
$V(A_1(b) \cup A_2(b))=V(A)$ 
and $E(A_1(b) \cup A_2(b))=E(A_1(b)) \cup E(A_2(b))$.
Then $A_1(b) \cup A_2(b)$, $A_1(b) \cup A_2(b)-a_1$ and $A_1(b) \cup A_2(b)-a_n$ 
are symmetric in the above sense.

Also by the argument in the proof of Lemma~\ref{LemmaB}, 
for any $b\in B-B_0$ ($n-b$ is even) and for any $j=1,2$, 
$A_j(b)-a_1$ and $A_j(b)-a_n$ are symmetric in the above sense.
Hence the statements (1) and (2) hold.

Now we show that the statements (3) and (4) hold.
For any $i=2,\ldots,n$,
\[ \varphi_0(a_i)=\phi_{v_1}(\tau_0(a_i))
=\phi_{v_1}(a_{n-i+2})= a'_i=\overline{\varphi}(a_i). \]
Also for any $i=1,\ldots,n-1$,
\[ \varphi_1(a_i)=\phi_{v_2}(\tau_1(a_i))
=\phi_{v_2}(a_{n-i})= a'_i=\overline{\varphi}(a_i). \]
Here $a_1=v_1$ and $a_n=v_2$ (and $a'_1=v'_1$ and $a'_n=v'_2$)
do not span an edge of $A$ (and $A'$ respectively).

Hence for any $i,j \in \{1,\ldots,n\}$,
$[a_i,a_j]\in E(A)$ if and only if $[a'_i,a'_j]\in E(A')$, 
because $\varphi_0$ and $\varphi_1$ are isomorphisms.
Thus $\overline{\varphi}:A\to A'$ as 
$\overline{\varphi}(a_i)=a'_i$ for $i=1,\ldots,n$ 
is an isomorphism and the statements (3) and (4) hold.

Conversely we show that if the statements (1) and (2) hold 
then $B_0=C_0$.
Suppose that $B_0\neq C_0$.
Then there exists $b_0 \in B_0-C_0$ or $c_0 \in C_0-B_0$.
If there is $b_0 \in B_0-C_0$, then 
$A_1(b_0)$ is a subgraph of $A$ and 
$A_2(b_0)$ is not a subgraph of $A$.
If there is $c_0 \in C_0-B_0$, then 
$A_2(c_0)$ is a subgraph of $A$ and 
$A_1(c_0)$ is not a subgraph of $A$.
Hence $A-a_1$ and $A-a_n$ are not symmetric in the above sense.
Thus (1) and (2) do not hold.
\end{proof}

We remark that 
any $A=A(n;B,C)$ and $A'=A(n;C,B)$ with $1\in C$ and $B_0=C_0$ 
as in Proposition~\ref{PropositionA} are isomorphic 
and they can be realized 
as the full-subgraphs $A$ and $A'$ of $\Gamma$ and $\Gamma'$ respectively 
in the structure $(\mathcal{F})$.
On the other hand, if $B_0\neq C_0$ then in general 
there is a possibility that 
$A=A(n;B,C)$ and $A'=A(n;C,B)$ 
can not become finite simple graphs satisfying the condition~$(*)$.
We introduce this in Example~\ref{example-A5}.
Also in the case that $B_0\neq C_0$, 
even if $A=A(n;B,C)$ and $A'=A(n;C,B)$ 
are finite simple graphs satisfying the property~$(*)$, 
it is not obvious that $A\cong A'$.
This can be seen from Examples~\ref{example-A1}--\ref{example-A4}.

\begin{Example}\label{example-A1}
We consider $A=A(8;\{3\},\{1\})$ and $A'=A(8;\{1\},\{3\})$ 
(see Figure~\ref{figure-A-example1}).
Here $n=8$ is even.
Then $A$ and $A'$ have two connected components and they are isomorphic.

\begin{figure}[h]
\centering
{
\vspace*{3mm}
\includegraphics[keepaspectratio, scale=0.90, bb=0 0 293 81]{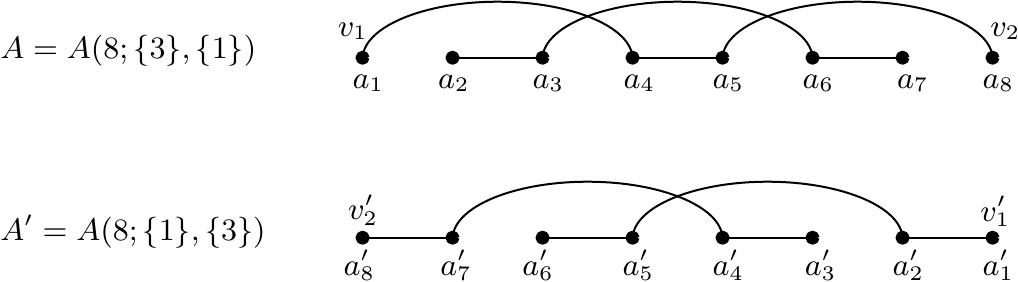}
}
\caption{Example~\ref{example-A1}}\label{figure-A-example1}
\end{figure}

In this example, since $A\cong A'$, 
we can construct a bijective map $f:V(A)\to V(A')$ and 
an isomorphism $\phi_v:A-v \to A'-f(v)$ for any $v\in V(A)$.
Here for $v_1=a_1$, $v_2=a_8$, $v'_1=f(v_1)=a'_1$ and $v'_2=f(v_2)=a'_8$, 
we can take the isomorphisms $\phi_{v_1}:A-v_1 \to A'-v'_1$ and 
$\phi_{v_2}:A-v_2 \to A'-v'_2$ as (14) and (15) in Theorem~\ref{Thm6} 
and for $\Gamma:=A$ and $\Gamma':=A'$,
there exist normal-arrows $v_1 \longrightarrow v_2$ and $v_2 \longrightarrow v_1$ 
in $\widetilde{\Gamma}$.
Then there do not exist dashed-arrows $v_1 \dashrightarrow v_2$ and $v_2 \dashrightarrow v_1$ 
in $\widetilde{\Gamma}$, 
since the sequence $a_1,a_2,\ldots,a_8$ is not a Hamilton-path of $A$.
\end{Example}

\begin{Example}\label{example-A2}
We consider $A=A(8;\{1,3\},\{1,5\})$ and $A'=A(8;\{1,5\},\{1,3\})$ 
(see Figure~\ref{figure-A-example2}).
Here $n=8$ is even.
Then $A$ and $A'$ are isomorphic as in Figure~\ref{figure-A-example2}.

\begin{figure}[h]
\centering
{
\vspace{3mm}
\includegraphics[keepaspectratio, scale=0.90, bb=0 0 301 220]{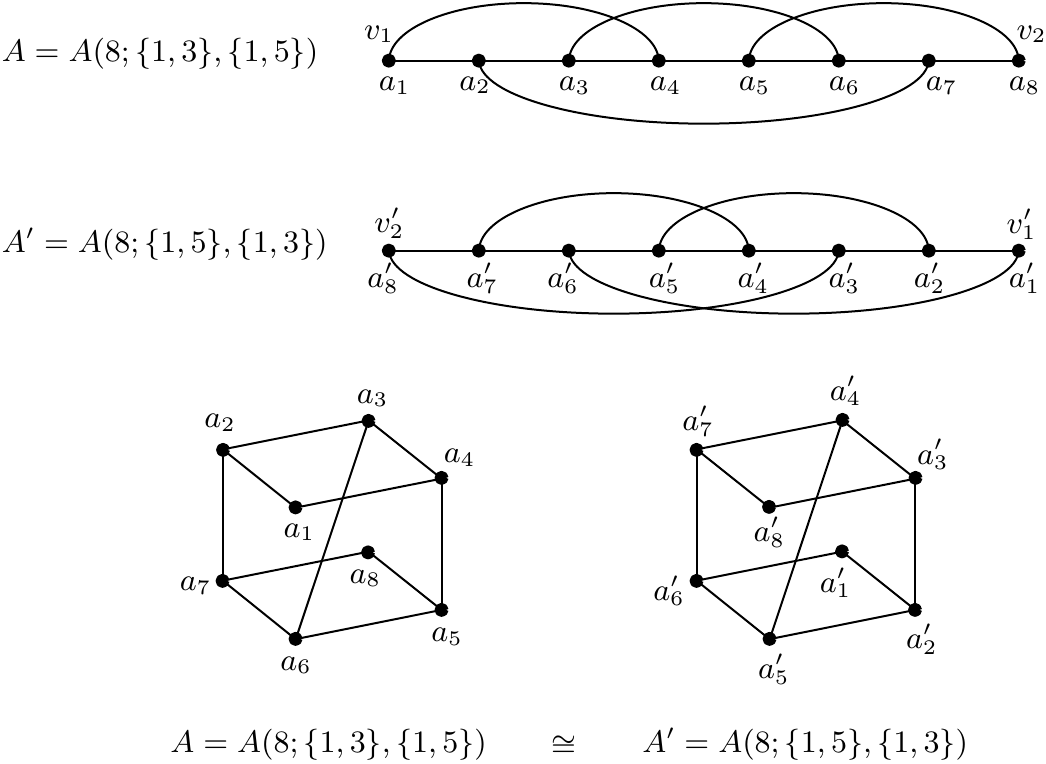}
}
\caption{Example~\ref{example-A2}}\label{figure-A-example2}
\end{figure}

In this example, since $A\cong A'$, 
we can construct a bijective map $f:V(A)\to V(A')$ and 
an isomorphism $\phi_v:A-v \to A'-f(v)$ for any $v\in V(A)$ 
such that for $v_1=a_1$, $v_2=a_8$, $v'_1=f(v_1)=a'_1$, $v'_2=f(v_2)=a'_8$ and 
for $\Gamma:=A$ and $\Gamma':=A'$, 
there exist arrows $v_1 \longrightarrow v_2$, $v_2 \longrightarrow v_1$, 
$v_1 \dashrightarrow v_2$ and $v_2 \dashrightarrow v_1$ 
in $\widetilde{\Gamma}$.
\end{Example}

\begin{Example}\label{example-A3}
We consider $A=A(8;\{3,4\},\{1\})$ and $A'=A(8;\{1\},\{3,4\})$ 
(see Figure~\ref{figure-A-example3}).
Here $n=8$ is even 
and there are an even number and an odd number both in $B=\{3,4\}$.
Then $A$ and $A'$ have two connected components and they are isomorphic 
as in Figure~\ref{figure-A-example3}.

\begin{figure}[h]
\centering
{
\vspace{5mm}
\includegraphics[keepaspectratio, scale=0.90, bb=0 0 298 210]{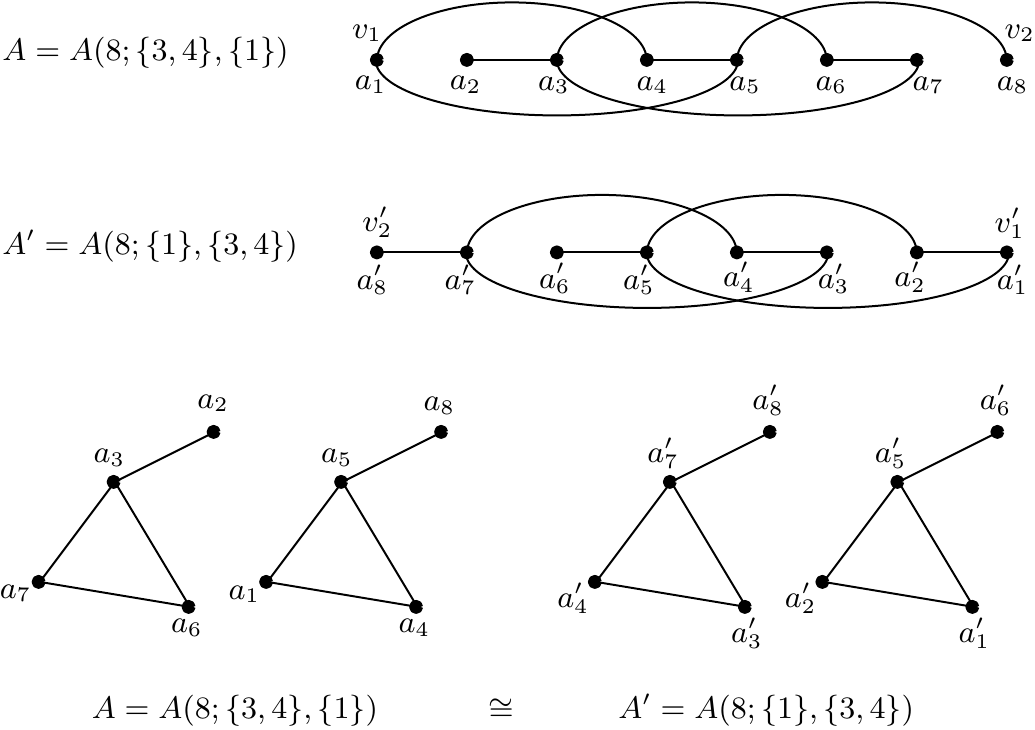}
}
\caption{Example~\ref{example-A3}}\label{figure-A-example3}
\end{figure}

In this example, since $A\cong A'$, 
we can construct a bijective map $f:V(A)\to V(A')$ and 
an isomorphism $\phi_v:A-v \to A'-f(v)$ for any $v\in V(A)$ 
such that for $v_1=a_1$, $v_2=a_8$, $v'_1=f(v_1)=a'_1$, $v'_2=f(v_2)=a'_8$ and 
for $\Gamma:=A$ and $\Gamma':=A'$,
there exist normal-arrows $v_1 \longrightarrow v_2$ and $v_2 \longrightarrow v_1$ 
in $\widetilde{\Gamma}$.
Here there do not exist dashed-arrows $v_1 \dashrightarrow v_2$ and $v_2 \dashrightarrow v_1$ 
in $\widetilde{\Gamma}$.
\end{Example}

\begin{Example}\label{example-A4}
We consider $A=A(7;\{1,2,3\},\{1,4\})$ and $A'=A(7;\{1,4\},\{1,2,3\})$ 
(see Figure~\ref{figure-A-example4}).
Here $n=7$ is odd and 
there are an even number and an odd number both in $B-C=\{2,3\}$.
Then $A$ and $A'$ are isomorphic as in Figure~\ref{figure-A-example4}.

\begin{figure}[h]
\centering
{
\vspace{3mm}
\includegraphics[keepaspectratio, scale=0.90, bb=0 0 289 224]{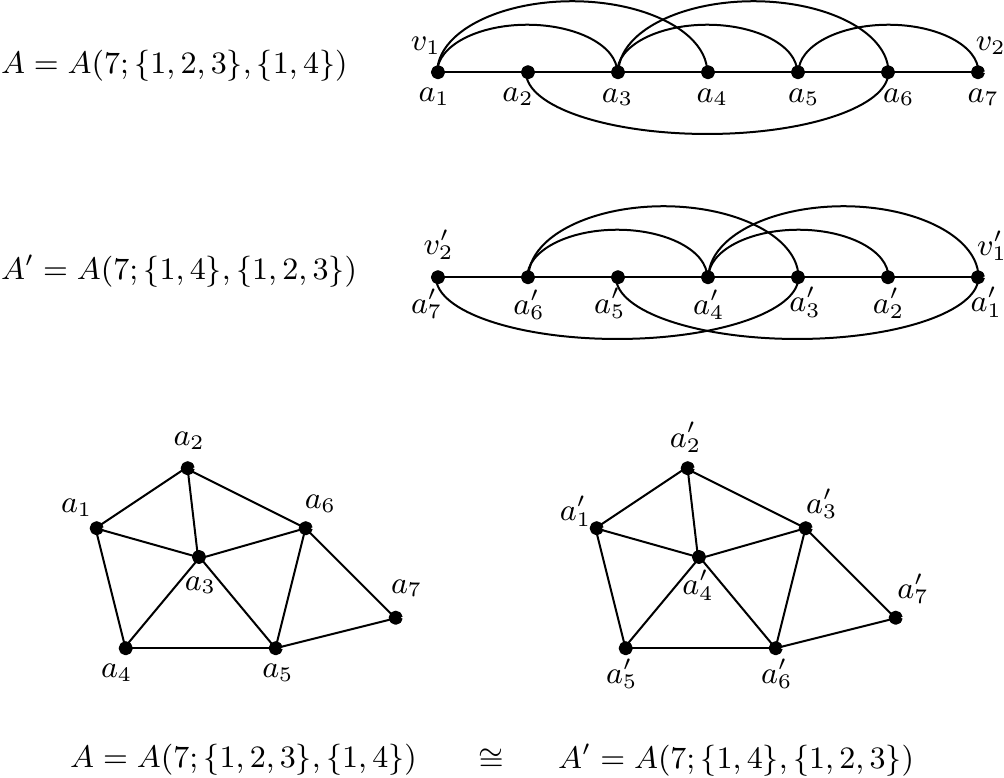}
}
\caption{Example~\ref{example-A4}}\label{figure-A-example4}
\end{figure}

In this example, since $A\cong A'$, 
we can construct a bijective map $f:V(A)\to V(A')$ and 
an isomorphism $\phi_v:A-v \to A'-v'$ for any $v\in V(A)$ 
such that for $v_1=a_1$, $v_2=a_7$, $v'_1=f(v_1)=a'_1$, $v'_2=f(v_2)=a'_7$ and 
for $\Gamma:=A$ and $\Gamma':=A'$,
there exist arrows $v_1 \longrightarrow v_2$, $v_2 \longrightarrow v_1$, 
$v_1 \dashrightarrow v_2$ and $v_2 \dashrightarrow v_1$
in $\widetilde{\Gamma}$.
\end{Example}

\begin{Example}\label{example-A5}
We consider $A=A(8;\{1,3,4\},\{1,5\})$ and $A'=A(8;\{1,5\},\{1,3,4\})$ 
(see Figure~\ref{figure-A-example5}).
Here $\beta(A)=\gamma(A)$ holds.
Also we note that 
there exists a bijective map $g:V(A)\to V(A')$ such that $\deg_A v=\deg_{A'} g(v)$ 
for all $v\in V(A)$.

\begin{figure}[h]
	\centering
	{
		\vspace{4mm}
		\includegraphics[keepaspectratio, scale=0.90, bb=0 0 316 104]{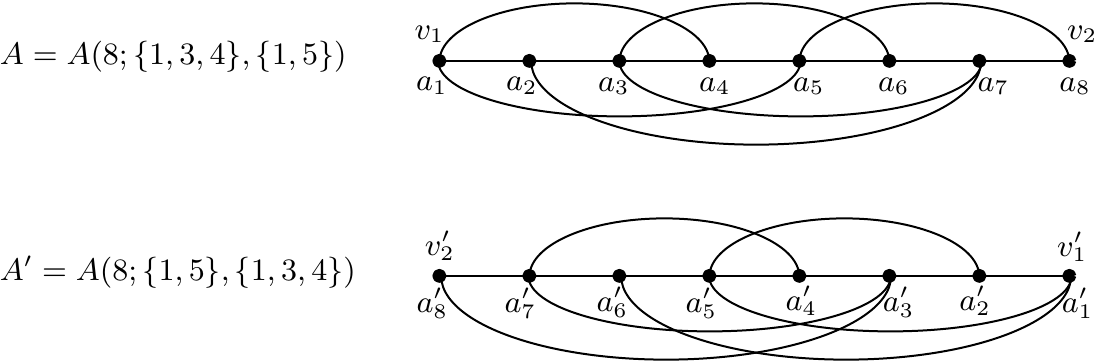}
	}
	\caption{Example~\ref{example-A5}}\label{figure-A-example5}
\end{figure}

Then $A$ and $A'$ are not isomorphic.
Indeed, there is a triangle $a'_3,a'_7,a'_8$ in $A'$ 
such that $\deg_{A'}a'_3=4$, $\deg_{A'}a'_7=4$ and $\deg_{A'}a'_8=2$.
On the other hand, there is not such a triangle in $A$.
In the graph $A$, $a_8$ is the unique vertex with degree $2$ 
and $a_5$ and $a_7$ are the vertices connecting to $a_8$ with degree $4$.
Here $a_5$ and $a_7$ do not span an edge in $A$.

It is known that 
every simple graph $\Gamma$ with at least 3 vertices and at most 11 vertices 
is reconstructible \cite{Mc2}.
Hence the two graphs $A=A(8;\{1,3,4\},\{1,5\})$ and $A'=A(8;\{1,5\},\{1,3,4\})$ 
can not satisfy the property~$(*)$.
\end{Example}

By Example~\ref{example-A5}, in general, 
there is a possibility that 
$A=A(n;B,C)$ and $A'=A(n;C,B)$ 
(still if they satisfy $\beta(A)=\gamma(A)$ and 
still if there is a bijective map $g:V(A)\to V(A')$ such that $\deg_A v=\deg_{A'} g(v)$ 
for all $v\in V(A)$) 
can not become finite simple graphs satisfying the property~$(*)$.

\medskip

Here the following problem arises.

\begin{Problem}
The full-subgraphs $A=A(n;B,C)$ and $A'=A(n;C,B)$ of $\Gamma$ and $\Gamma'$ respectively 
in Theorem~\ref{Thm6} will be isomorphic?
\end{Problem}

\section{Examples and remarks on $\beta$-type cycles}\label{sec7}

We give an example and a remark on $\beta$-type cycles.

\begin{Example}\label{example-B1}
We consider two graphs $\Gamma$ and $\Gamma'$ as in Figure~\ref{figure-B-example1}.
Here $\Gamma$ and $\Gamma'$ are isomorphic.
We define isomorphisms $\phi_{v_1}:\Gamma -v_1 \to \Gamma' -v'_1$ 
and $\phi_{v_2}:\Gamma -v_2 \to \Gamma' -v'_2$ as in Figure~\ref{figure-B-example1-2}.

\begin{figure}[h]
	\centering
	{
		\vspace{4mm}
		\includegraphics[keepaspectratio, scale=0.90, bb=0 0 253 210]{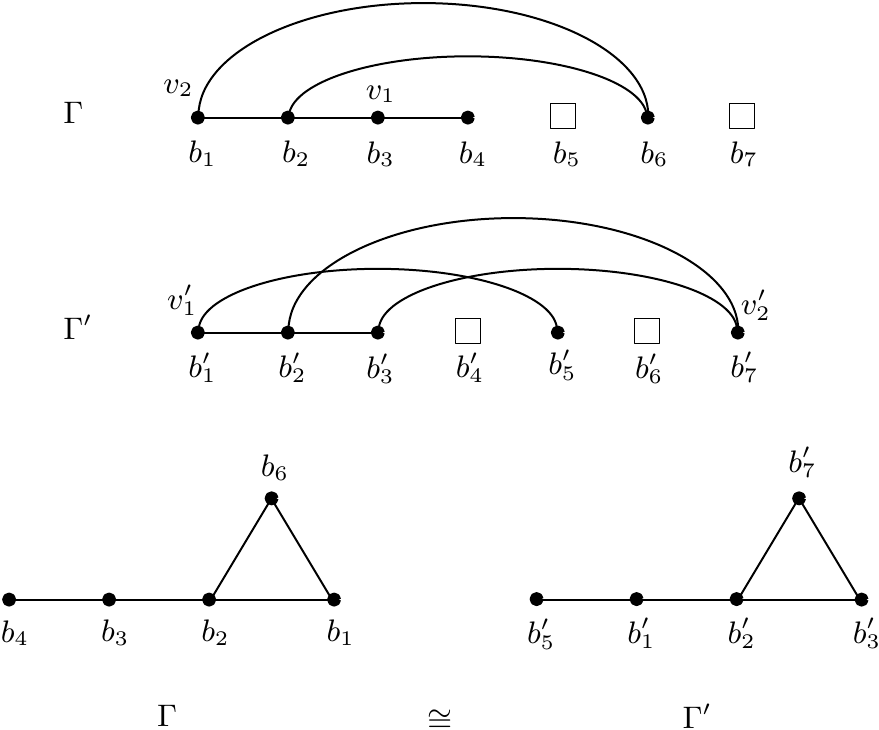}
	}
	\caption{Example~\ref{example-B1}}\label{figure-B-example1}
\end{figure}

We consider a sequence $\{b'_i\}$ in $V(\Gamma)$ as 
\begin{align*}
&b'_1:=v'_1, \\
&b'_2:=\phi_{v_1}(v_2), \\
&b'_3:=\phi_{v_1}\circ \phi^{-1}_{v_2}(v'_1), \\
&b'_4:=\phi_{v_1}\circ \phi^{-1}_{v_2}\circ\phi_{v_1}(v_2), \\
&b'_5:=\phi_{v_1}\circ \phi^{-1}_{v_2}\circ\phi_{v_1}\circ \phi^{-1}_{v_2}(v'_1), \\
&\cdots.
\end{align*}
Here $b'_4$, $b'_6$ and $b'_i$ ($i \ge 8$) are blanks.

Also consider a sequence $\{b_i\}$ in $V(\Gamma)$ as 
\begin{align*}
&b_1:=v_2, \\
&b_2:=\phi^{-1}_{v_2}(v'_1), \\
&b_3:=\phi^{-1}_{v_2}\circ \phi_{v_1}(v_2), \\
&b_4:=\phi^{-1}_{v_2}\circ \phi_{v_1}\circ\phi^{-1}_{v_2}(v'_1), \\
&b_5:=\phi^{-1}_{v_2}\circ \phi_{v_1}\circ\phi^{-1}_{v_2}\circ \phi_{v_1}(v_2), \\
&\cdots.
\end{align*}

\begin{figure}[h]
	\centering
	{
		\includegraphics[keepaspectratio, scale=0.90, bb=0 0 427 300]{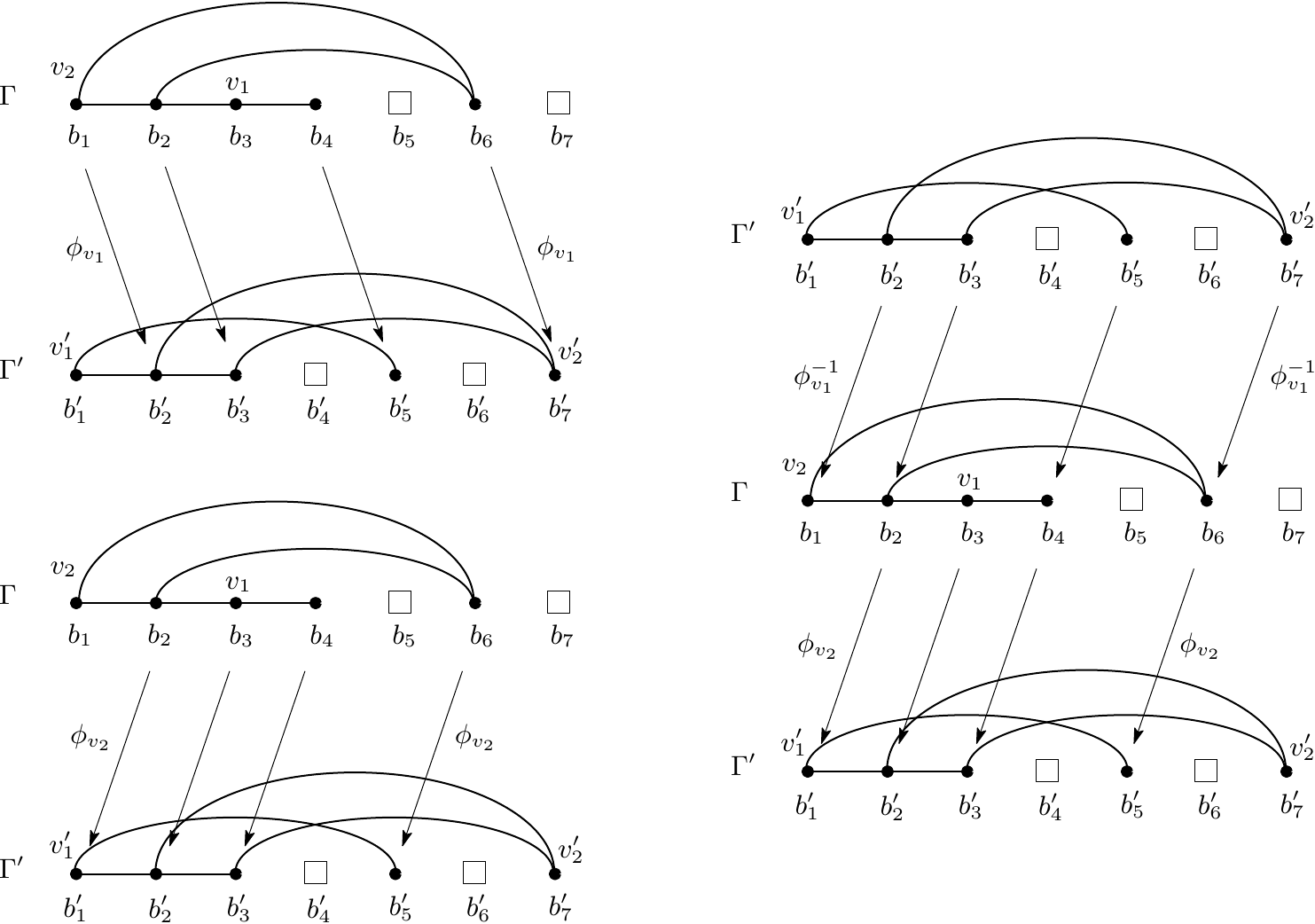}
\vspace*{1mm}
	}
	\caption{Isomorphisms $\phi_{v_1}$ and $\phi_{v_2}$}\label{figure-B-example1-2}
\end{figure}

Here $b_5$, $b_7$ and $b_i$ ($i \ge 8$) are blanks.

Then $[v'_1,\phi_{v_1}(v_2)]=[b'_1,b'_2]\in E(\Gamma')$, 
$[v'_2,\phi_{v_2}(v_1)]=[b'_7,b'_2]\in E(\Gamma')$, 
$[v_1,\phi^{-1}_{v_1}(v'_2)]=[b_3,b_6] \not\in E(\Gamma)$ 
and $[v_2,\phi^{-1}_{v_2}(v'_1)]=[b_1,b_2] \in E(\Gamma)$.
Hence there exist arrows $v_1 \longrightarrow v_2$, $v_2 \longrightarrow v_1$ 
and $v_2 \dashrightarrow v_1$ in $\widetilde{\Gamma}$.
The cycle $v_1,v_2$ is $\beta$-type.

The isomorphism 
\[ \phi_{v_2} \circ \phi^{-1}_{v_1}:\Gamma' -\{b'_1,b'_2\} \xrightarrow[\cong]{\; \phi^{-1}_{v_1}| \;} 
\Gamma -\{v_1,v_2\} \xrightarrow[\cong ]{\; \phi_{v_2}| \;} \Gamma' -\{b'_2,b'_7\} \]
is as in Figure~\ref{figure-B-example1-2}, 
where $b'_1=v'_1$, $b'_2=\phi_{v_1}(v_2)=\phi_{v_2}(v_1)$ and $b'_7=v'_2$.
\end{Example}

\begin{Remark}
We define a subgraph $\overline{A}(n,p;B,C)$ of $A(n;B,C)$.

Let $n\in {\mathbb{N}}$ be a number at least $3$, 
let $p\in {\mathbb{N}}$ with $p < n$ 
and let $B$ and $C$ be subsets of the set $\{1,\ldots,n-1 \}$.
Let $\{ a_1,\ldots,a_n \}$ be the vertex set of $A(n;B,C)$ 
whose numbering is as in the definition of $A(n;B,C)$.
Let $P:= \{ a_i \,|\, i=p+2t \ (0 \le t \in {\mathbb{Z}}) \}$.

Then we define $\overline{A}(n,p;B,C)$ as the full-subgraph 
of $A(n;B,C)$ whose vertex set is $\{ a_1,\ldots,a_n \}-P$; 
that is, $a_i$ is blank if $a_i\in P$.

In Example~\ref{example-B1}, 
the graphs $\Gamma$ and $\Gamma'$ 
with the vertex sets $\{ b_1,b_2,b_3,b_4,b_5,b_6,b_7 \}$ 
and $\{ b'_1,b'_2,b'_3,b'_4,b'_5,b'_6,b'_7 \}$ respectively 
can be denoted by $\Gamma=\overline{A}(7,5;\{1,5\},\{1,4\})$ 
and $\Gamma'=\overline{A}(7,4;\{1,4\},\{1,5\})$.
\end{Remark}

%

%
\end{document}